\newcommand{\R}{\mathbb{R}}
\newcommand{\E}{\mathbb{E}}
\def\a{{\bf a}}
\def\b{{\bf b}}
\def\c{{\bf c}}
\def\x{{\bf x}}
\def\y{{\bf y}}
\def\z{{\bf z}}
\def\e{{\bf e}}
\def\u{{\bf u}}
\def\v{{\bf v}}
\def\w{{\bf w}}
\def\q{{\bf q}}
\def\0{{\bf 0}}
\def\E{{\mathbb E}}
\def\S{{\mathbb S}}
\def\R{{\mathcal R}}
\def\V{{\mathcal V}}
\def\TT{{\tt T}}
\newcommand{\be}{\begin{equation}}
\newcommand{\ee}{\end{equation}}
\newcommand{\bea}{\begin{eqnarray}}
\newcommand{\eea}{\end{eqnarray}}
\newcommand{\bes}{\begin{equation*}}
\newcommand{\ees}{\end{equation*}}
\newcommand{\beas}{\begin{eqnarray*}}
\newcommand{\eeas}{\end{eqnarray*}}
\newcommand{\csp}[1]{{\color{black}  #1}}
\newcommand{\cs}[1]{{#1}}
\newtheorem*{rep@theorem}{\rep@title}
\newcommand{\newreptheorem}[2]{%
\newenvironment{rep#1}[1]{%
 \def\rep@title{#2 \ref{##1} (restated)}%
 \begin{rep@theorem}}%
 {\end{rep@theorem}}}
\newtheorem{thm}{Theorem}
\newtheorem*{thm*}{Theorem}
\newtheorem{cor}[thm]{Corollary}
\newtheorem{lem}[thm]{Lemma}
\newtheorem*{lem*}{Lemma}
\newtheorem{prop}[thm]{Proposition}
\newtheorem{defn}[thm]{Definition}
\newtheorem{rem}[thm]{Remark}
\title{ A deterministic Kaczmarz algorithm for solving linear systems }
\author{Changpeng Shao\thanks{changpeng.shao@bristol.ac.uk}}
\affil{School of Mathematics, University of Bristol, UK}
\date{\today}
\begin{document}

\maketitle

\begin{abstract}

{
We propose a new deterministic Kaczmarz algorithm for solving consistent linear systems $A\x=\b$. Basically, the algorithm replaces orthogonal projections with reflections in the original scheme of Stefan Kaczmarz. 
Building on this, we give a geometric description of solutions of linear systems. Suppose $A$ is $m\times n$, we show that the algorithm generates a series of points distributed with patterns on an $(n-1)$-sphere centered on a solution. These points lie evenly on $2m$ lower-dimensional spheres $\{\S_{k0},\S_{k1}\}_{k=1}^m$, with the property that for any $k$, the midpoint of the centers of $\S_{k0},\S_{k1}$ is exactly a solution of $A\x=\b$. 
With this discovery, we prove that taking the average of $O(\eta(A)\log(1/\varepsilon))$ points on any $\S_{k0}\cup\S_{k1}$ effectively approximates a solution up to relative error $\varepsilon$, where $\eta(A)$ characterizes the eigengap of the orthogonal matrix produced by the product of $m$ reflections generated by the rows of $A$.
We also analyze the connection between $\eta(A)$ and $\kappa(A)$, the condition number of $A$. In the worst case $\eta(A)=O(\kappa^2(A)\log m)$, while for random matrices $\eta(A)=O(\kappa(A))$ on average.
Finally, we prove that the algorithm indeed solves the linear system $A^{\TT}W^{-1}A \x = A^{\TT}W^{-1} \b$, where $W$ is the lower-triangular matrix such that $W+W^{\TT}=2AA^{\TT}$. The connection between this linear system and the original one is studied. 
The numerical tests indicate that this new Kaczmarz algorithm has comparable performance to randomized (block) Kaczmarz algorithms.

}



\vspace{.2cm}

{\bf Key words:} Kaczmarz algorithm; linear systems; reflections.

\vspace{.2cm}

{\bf MSC:} 65F10.

\end{abstract}

\section{Introduction}

Solving systems of linear equations is a fundamental problem in science and engineering. In practice, the size of linear equations is usually so large that iterative methods are more favourable. Among all the iterative methods, the {\em Kaczmarz method} (also known as {\em alternating projection} or {\em the alternating method of von Neumann}) is popular due to its simplicity and efficiency. 

Assume that $A$ is an $m\times n$ real-valued matrix, $\b$ is an $m\times 1$ real-valued vector.
The Kaczmarz method solves the linear system $A\x = \b$ in the following way: Let the $i$-th row of $A$ be $A_i^{\TT}$, the $i$-th entry of $\b$ be $b_i$. Arbitrarily choose an initial approximation $\x_0$ of the solution. For $k=0,1,2,\cdots$, compute
\be \label{eq1}
\x_{k+1} = \x_k + \alpha_k   \frac{b_{i_k} -A_{i_k}^{\TT}\x_k}{\|A_{i_k}\|^2}   A_{i_k} ,
\ee
where $i_k\in\{1,2,\ldots,m\}$ is chosen according to some predefined principles and $\alpha_k\in(0,2)$ is the {\em relaxation parameter} and $\|\cdot\|$ denotes the 2-norm. In each iteration, the Kaczmarz method only uses one row of the matrix $A$, which makes it easy to implement. When the relaxation parameter $\alpha_k = 1$, the Kaczmarz method has a clear geometric meaning. It returns the solution of $A_{i_k}^{\TT} \x = \b_{i_k}$ that has the minimal distance to $\x_k$. Namely, the approximate solution $\x_{k+1}$ discovered in the $k$-th step is the orthogonal projection of $\x_k$ on the hyperplane defined by $A_{i_k}^{\TT} \x = \b_{i_k}$.

This method was first discovered in 1937 by Kaczmarz  \cite{karczmarz1937angenaherte} and was rediscovered in 1970 by Gordon, Bender, and Herman \cite{GORDON1970471} in the field of image reconstruction. In the most original form, $\alpha_k = 1$ and $i_k = (k \mod m) + 1$. Regarding this method, the conditions for convergence have been established, while useful theoretical estimates of the rate of convergence are difficult to obtain.  Previous results \cite{Deutsch1985,DEUTSCH1997381,GALANTAI200530,ma2015convergence} relate to some quantities (e.g., $\det(A^{\TT}A)$) depending on $A$ which are usually hard to use as a criterion to compare the performance of the Kaczmarz method with other iterative methods. 

A popular way to resolve this is to introduce randomness into the Kaczmarz method. In 2009, Strohmer and Vershynin \cite{strohmer2009randomized} first introduced a randomized version of the Kaczmarz method, in which $\alpha_k = 1$ and $i_k$ is chosen according to the probability distribution that ${\rm Prob}(j) = \|A_j\|^2/\|A\|_F^2$ for $j\in\{1,2,\ldots,m\}$. They gave a tight estimate of the convergence rate (i.e., \cs{the number of iterations}). The expected \cs{number of iterations} quadratically depends on the scaled condition number of $A$.
They also provided evidence that, in some cases, their randomized Kaczmarz method is more efficient than the conjugate gradient method, the most popular algorithm for solving large linear systems. Later in 2015, Gower and Richt{\'a}rik \cite{gower2015randomized} developed a versatile randomized iterative method for solving linear equations. \cs{It includes Strohmer and Vershynin's algorithm as a special case.}

To further \cs{improve the efficiency} of the randomized Kaczmarz method, the idea of parallelism was used, such as in \cite{richtarik2020stochastic,necoara2019faster,moorman2021randomized}. The basic idea is to use multiple rows in each step of the iteration. \cs{This will increase the cost of each step of the iteration. But it can reduce the number of iterations as expected.} One simple version (known as randomized  block Kaczmarz method \cite{moorman2021randomized}) is 
\be \label{neq1}
\x_{k+1} = \x_k + \alpha_k \sum_{i\in Q_k} w_i \frac{b_{i} -A_{i}^{\TT}\x_k}{\|A_{i}\|^2}  A_{i} ,
\ee
where the weights $w_i\in[0,1]$ satisfy $\sum_i w_i = 1$, and $\alpha_k \in (0,2)$. The multiset $Q_k$ is a collection of row indices such that $i$ is put into it with probability  $\|A_i\|^2/\|A\|_F^2$. 
It was shown (e.g., in \cite{moorman2021randomized}) that  \cs{the number of iterations} of this randomized  block Kaczmarz method is quadratic in the condition number of $A$ \cs{when  $\#(Q_k)= \lceil \|A\|_F^2/\|A\|^2 \rceil$. Although the overall complexity does not change theoretically, it usually works very well in practice.} This method was recently used in \cite{shao-montanaro21} to give the currently best quantum-inspired classical algorithm for solving linear equations. \cs{In addition to} the above work, there are many  studies aimed at accelerating or generalizing the (randomized) Kaczmarz method, for example, see \cite{eldar2011acceleration,gower2019adaptive,jiao2017preasymptotic,needell2014paved,needell2014stochastic,zouzias2013randomized,needell2015randomized,yaniv2021selectable,jarman2021randomized,xiang2017randomized,wu2020projected,shao2020row}.

\subsection{Our results}

In this paper, we investigate the Kaczmarz method (\ref{eq1}) from a new aspect. Firstly, instead of using $\alpha_k \in (0,2)$, we  choose $\alpha_k = 2$. This will generate a series of vectors $\{\x_0,\x_1,\x_2,\ldots\}$ distributed on a sphere centered on a solution. Apparently, $\x_k$ does not converge to any solution of the linear system $A\x=\b$ even if it is consistent. To obtain a solution, we study the  problem of determining $N$ and ${k_1},\ldots,{k_N}$ such that the average $(1/N)\sum_{j=1}^{N} \x_{k_j}$ is close to a solution. \cs{In this process, we find a new geometric fact of the solutions of linear systems.}
Secondly, we will not introduce randomness. To be more exact, we still set $i_k = (k\mod m) + 1$ in the $k$-th step of the iteration. As a direct result, this Kaczmarz method is deterministic. 

Throughout this paper, the following notation will be used frequently
\bea
\R_i &:=& I_n - 2 \frac{A_{i}A_{i}^{\TT}}{\|A_{i}\|^2}, \quad (i=1,2,\ldots,m),  \label{reflection} \\
\R_A &:=& \R_m \cdots \R_2 \R_1. \label{prod of reflections}
\eea
Obviously, $\R_i$ is the reflection generated by the $i$-th row of $A$.
With this notation, \cs{the iterative process} (\ref{eq1}) with $\alpha_k=2$ can be simplified as
\be \label{eq1-0}
\x_{k+1} = \R_{i_k} \x_k + \frac{2 b_{i_k}}{\|A_{i_k}\|^2}   A_{i_k}.
\ee

\subsubsection{The matrix $A$ is invertible}

When $A$ is invertible, the linear system $A\x=\b$ has a unique solution $\x_* = A^{-1} \b$. In this case, the iterative procedure (\ref{eq1-0}) can be reformulated as
\be \label{eq1-case1}
\x_{k+1} = \x_* + \R_{i_k} (\x_k - \x_*).
\ee
Since $\R_{i_k}$ is a reflection, all $\x_k$ lie on the sphere
\be
\S := \{\x \in \mathbb{R}^n: \|\x - \x_*\| = \|\x_0 - \x_*\|\}.
\ee
We consider the following two subsets of $\S$
\bea 
\S_0 \hspace{.3cm} 
:=& \hspace{-.34cm} 
\{\y_i:=\x_* + \R_A^{2i} (\x_0-\x_*): i = 0,1,2,\ldots\} 
& = \hspace{.3cm}  
\{\x_0, \x_{2n}, \x_{4n}, \ldots \}, \label{sphere 0} \\
\S_1 \hspace{.3cm}
:=& \{\z_i:=\x_* + \R_A^{2i+1} (\x_0-\x_*): i = 0,1,2,\ldots\} 
&= \hspace{.3cm}  
\{\x_{n}, \x_{3n}, \x_{5n}, \ldots \}.
\label{sphere 1}
\eea
In the following, the sphere with minimal dimension such that $\S_j$ lies on it is called the {\em minimal sphere} supporting $\S_j$. 
Our first main result is summarized as follows.

\begin{repthm}{main thm}
For $j=0,1$,  let $\c_j$ be the center of the minimal sphere supporting $\S_j$, then $\c_0+\c_1 = 2\x_*$. 
\end{repthm}

Theorem \ref{main thm} provides a clear geometric description of solutions of linear systems. The midpoint of the centers of two spheres generated by the process (\ref{eq1-0}) is exactly the solution of the linear system, see Figure \ref{fig3-2} for an illustration of dimension 3. This reduces the algebraic problem of solving a linear system of equations into a geometric problem of finding the centers of spheres. A simple way to find the centers is to take the average of some points on the spheres. This leads to the question, how many points do we need? Before presenting our next main result, we introduce the following concept, \csp{which has a close connection to the condition number of $A$, see Propositions \ref{prop:upper bound of convergence rate} and \ref{prop:relation of eta and kappa}.}

\begin{figure}[h]
     \centering
     \subfigure[An illustration of the distribution of $\S_0,\S_1$ on the sphere: the two circles represent the first 1000 vectors in $\S_0, \S_1$, the red points are their centers. The blue point is the solution, which is also the center of the sphere.]{
     \includegraphics[width=5.5cm]{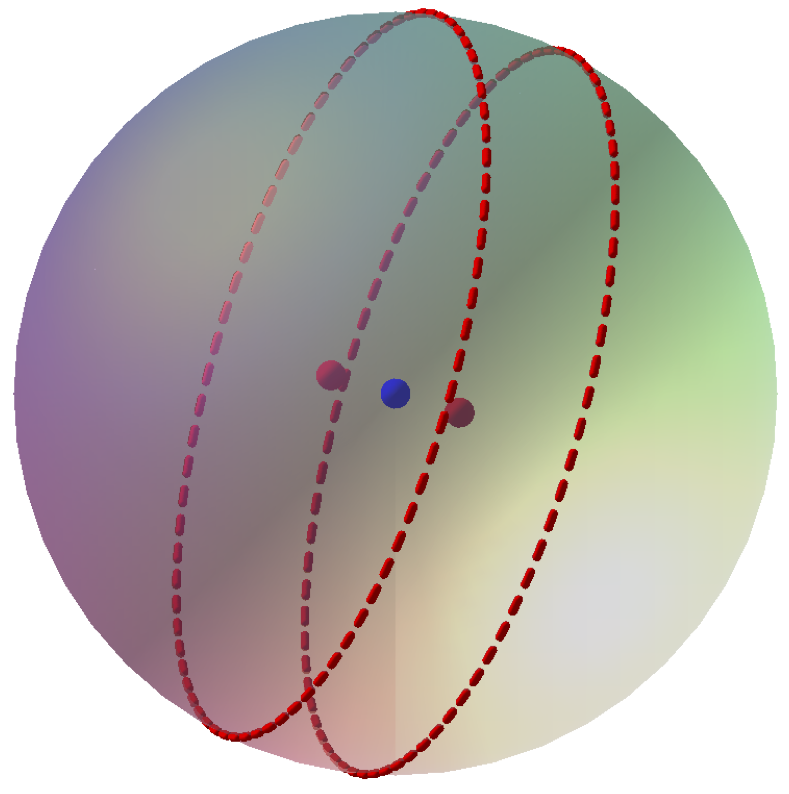}
     \label{fig3-2}
     }
     \hspace{2cm}
     \subfigure[The green line is the set of solutions, the red points are the centers of the blue circles generated by \cs{the procedure (\ref{eq1-case1})}.]{
     \includegraphics[width=5cm]{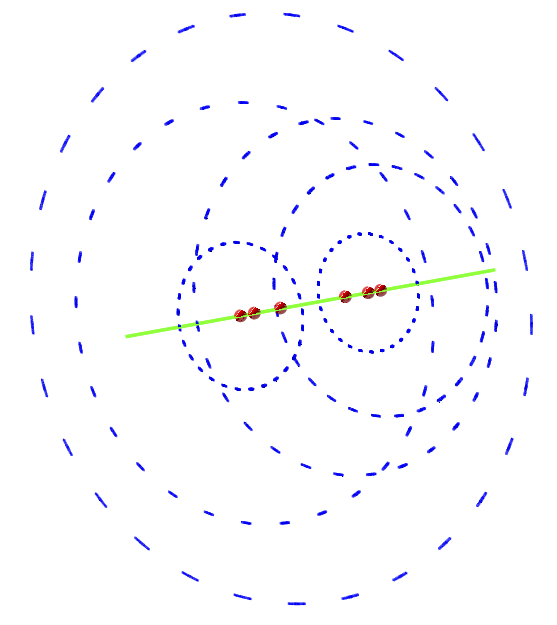}
     \label{circles in the singular case}
     }
     \caption{Illustrations of Theorems \ref{main thm} and \ref{thm1:singular case}.}
\end{figure}

{

\begin{defn}[Eigengap inverse]
\label{defn:eta}
Let $A$ be an $m\times n$ matrix with no zero rows. Let $\R_A$ be given in (\ref{prod of reflections}).
Denote the eigenvalues of $\R_A$ as $\{e^{i\theta_1},\ldots,e^{i\theta_n}\}$, where $|\theta_j|\leq \pi$. Then we define
\be
\eta(A) := \frac{1}{\min_{l: \theta_l\neq 0} |\theta_l|},
\ee
and call it the eigengap inverse of $\R_A$ (or $A$ for simplicity).
\end{defn}
}

\begin{repthm}{main thm2}
\cs{Let $A$ be an $n\times n$ invertible matrix, $\x_0, \b$ be two $n\times 1$ vectors, and $\x_* = A^{-1}\b$. Let $\y_j,\z_j$ be given in (\ref{sphere 0}), (\ref{sphere 1}), then}
\be
\label{main thm2:eq11}
\|\frac{1}{2N} \sum_{i=0}^{N-1} (\y_i+\z_i) - \x_*\| \leq \varepsilon \|\x_0-\x_*\|,
\ee
where
{
\be
N = \left\lceil \frac{\pi \eta(A)}{2\varepsilon } \right\rceil.
\ee
}
\end{repthm}

\cs{Based on the above result, to obtain an approximation of the solution up to relative error $\varepsilon$, we need to generate $O(\eta(A)/\varepsilon)$ points on the sphere. Indeed, this can be reduced to $O(\eta(A)\log(1/\varepsilon))$ by setting $\varepsilon=1/2$ in (\ref{main thm2:eq11}), viewing $\frac{1}{2N} \sum_{i=0}^{N-1} (\y_i+\z_i)$ as the new approximation of the solution, and restarting the procedure.\footnote{\csp{We would like to thank an anonymous referee for pointing out this idea, which is from Steinberger's paper \cite{steinerberger2020surrounding}, a paper closely related to ours. We will discuss more on this in Subsection 1.3.}}  
In this paper, the algorithm for solving linear systems based on (\ref{main thm2:eq11}) will be called the {\em deterministic iterative reflection algorithm}.
An explicit description of this algorithm is shown in Algorithm \ref{Kaczmarz algorithm}.}

\subsubsection{General consistent linear systems}

We now turn to the general case. For consistent linear systems, there can be one or infinitely many solutions. In the latter case, the \cs{deterministic iterative reflection algorithm} demonstrates some interesting properties. The following result states that if we run the \cs{deterministic iterative reflection algorithm} from $\x_0$, we end up with the solution that has the minimal distance to $\x_0$, see Figure \ref{circles in the singular case} for an illustration in dimension 3. \cs{In Figure \ref{circles in the singular case}, each circle is generated by the deterministic iterative reflection algorithm from some $\x_0$. The center of each circle is a solution of the linear system.}

\begin{repthm}{thm1:singular case}
Assume that $A\x = \b$ is consistent.
Let $\x_0$ be an arbitrarily chosen initial vector, then $\{\x_0,\x_1,\x_2,\ldots\}$ \cs{generated by the process (\ref{eq1-0})} lies on a sphere 
centered on the solution that has the minimal distance to $\x_0$.
\end{repthm}

\cs{Regarding Theorems \ref{main thm}, \ref{main thm2}, we prove that they are also correct if $A$ is reflection consistent (see Definition \ref{defn:Reflection consistent}). Reflection consistency is almost equivalent to the claim that $m-r$ is even, where $r$ is the rank of $A$, see Corollary \ref{cor:reflection consistence}. Although the condition that $m-r$ is even is easy to satisfy, it is interesting to see that the algorithm requires this special condition.}
To satisfy this condition, a simple approach is to randomly introduce some new linear constraints that do not change the solution set of $A\x = \b$. A difficulty here is that \cs{not all linear constraints are allowed}. We will show that the set of the new linear constraints such that the \cs{deterministic iterative reflection algorithm} fails is a set with zero Lebesgue measure. So we can choose almost any linear constraints we want.

Finally, when the linear system $A\x=\b$ is inconsistent, the \cs{deterministic iterative reflection algorithm} fails to find the least-squares solution. We show that the \cs{deterministic iterative reflection algorithm} indeed solves the linear system $A^{\TT}W^{-1}A\x = A^{\TT}W^{-1} \b$, where $W$ is the lower-triangular matrix such that $W+W^{\TT}=2AA^{\TT}$. \cs{Since solving least-squares problem $\min_{\x}\|A\x-\b\|$ is equivalent to solving $A^{\TT}A\x = A^{\TT} \b$, the above finding explains why the deterministic iterative reflection algorithm fails to return the least-squares solution.}
Note that the new linear system has a similar structure to the generalized least-square problem in statistics \cite{GoluVanl96}, where $W$ becomes a covariance matrix. 
For consistent linear systems, the new linear system $A^{\TT}W^{-1}A\x = A^{\TT}W^{-1} \b$ is equivalent to the original one when $A$ is reflection consistent. 
We will show that under reasonable assumptions, which can be satisfied easily by the \cs{idea} discussed in the above paragraph for consistent linear systems, the matrix $A^{\TT}W^{-1}A$ is invertible if $A$ has full column-rank. In this case, the new linear system $A^{\TT}W^{-1}A\x = A^{\TT}W^{-1} \b$ has a unique solution. Moreover, similar theoretical guarantees to Theorems \ref{main thm}, \ref{main thm2} also hold.
\cs{Nevertheless, it still remains a problem to modify the deterministic iterative reflection algorithm to solve the least-squares problems.}

\subsection{Comparison with previous Kaczmarz algorithms}

In this part, we theoretically compare the deterministic iterative reflection algorithm with two well-studied randomized Kaczmarz algorithms \cite{strohmer2009randomized, moorman2021randomized}. The  numerical comparisons are given in Section \ref{section:Numerical experiments}.

From Equations (\ref{sphere 0}), (\ref{sphere 1}), we know that $\y_{i}=\x_{2im}$ and $\z_{i}=\x_{(2i+1)m}$. Consequently, we can compute $\y_i$ (respectively $\z_i$) from $\y_{i-1}$ (respectively $\z_{i-1}$) with $O(mn)$ operations. By Theorem \ref{main thm2}, it totally uses $O(mn\eta(A)\log(1/\varepsilon))$ operations to obtain an approximation of the solution up to relative error $\varepsilon$.

Compared to previous Kaczmarz methods \cite{strohmer2009randomized, moorman2021randomized}, it is costly to use $O(mn)$ operations in each step of the iteration. 
There is actually a simple approach to resolving this problem.
We can decompose the set $\mathbb{X}:=\{\x_0,\x_1,\x_2, \ldots\}$ generated by the procedure (\ref{eq1-0}) into a union of $2m$ subsets, which have similar structures to $\S_0,\S_1$. More precisely, for $k\in\{0,1,\ldots,m-1\}$ and $j \in\{0,1\}$, we define
\bea
\label{new-def-intro}
\S_{kj} := \{ \x_* + \R_k \cdots \R_1 \R_A^{2i+j} (\x_0-\x_*): i = 0,1,2,\ldots\} 
= \{ \x_{k+(2i+j)m}: i = 0,1,2,\ldots\}, 
\eea
then
$\mathbb{X} =  \cup_{k=0}^{m-1} \cup_{j=0}^1 \S_{kj}$. As a direct consequence, we have similar results to Theorems \ref{main thm}, \ref{main thm2} for each $\S_{k0}\cup \S_{k1}$. Hence, we can use the average of the first $O(m\eta(A)\log(1/\epsilon))$ points in $\mathbb{X}$ to approximate the solution. This algorithm is summarized in Algorithm \ref{Kaczmarz algorithm2}. It requires more steps of iterations to converge but has a lower computational cost in each step of the iteration. Theoretically, this algorithm has the same complexity as the previous one. 

\setlength{\arrayrulewidth}{0.3mm}
{\renewcommand
\arraystretch{1.5}
\begin{table}[h]
\centering
\begin{tabular}{|c|c|c|c|c|} 
 \hline
 Algorithm & \# operations in each step & \# iterations &  Type \\ \hline
  SV \cite{strohmer2009randomized} & $O(n)$ & $O({\kappa_s^2(A)}\log(1/\varepsilon))$ & Randomized  \\
  MTMN \cite{moorman2021randomized} & $O(n\|A\|_F^2/\|A\|^2)$ & $O({\kappa^2(A)}\log(1/\varepsilon))$ & Randomized block \\
  Alg. \ref{Kaczmarz algorithm} (This paper) & \cs{$O(mn)$} & $O({\eta(A)\log (1/\varepsilon )})$
 & Deterministic  \\
  Alg. \ref{Kaczmarz algorithm2} (This paper) & $O(n)$ & $O({m\eta(A)\log (1/\varepsilon)})$
 & Deterministic  \\
 \hline
\end{tabular}
\caption{Comparison of different Kaczmarz methods, {where $\kappa_s(A)=\|A\|_F\|A^{-1}\|$ is the scaled condition number, $\kappa(A)=\|A\|\|A^{-1}\|$ is the condition number, and $\eta(A)$ is given in Definition \ref{defn:eta}.}}
\label{table:comparison}
\end{table}
}

In Table \ref{table:comparison}, we compare four randomized Kaczmarz algorithms in terms of the number of operations in each step and the total number of iterations.
\begin{enumerate}
\item Different from the other two algorithms, Algorithms \ref{Kaczmarz algorithm}, \ref{Kaczmarz algorithm2} are influenced by the eigengap inverse $\eta(A)$ arising from the eigenvalues of $\R_A$. It is generally a quite hard problem to estimate $\eta(A)$. In this paper, we show that $\eta(A) = O((\log n) \kappa^2(A))$ in the worst case and $\eta(A) = O(\kappa(A))$ for random matrices when $m\geq n\log n$; see Propositions \ref{prop:upper bound of convergence rate}, \ref{prop:relation of eta and kappa}.
%
%
\item The overall cost of SV's and MTMN's algorithm is $O(n\kappa_s^2(A)\log(1/\varepsilon))$, and the overall cost of Algorithm \ref{Kaczmarz algorithm} is $O(mn\eta(A) \log(1/\varepsilon))$.
Note that $\kappa_s(A) \leq \min(m,n)\kappa(A)$. So for over-determined linear systems (i.e., $m> n$),  Algorithm \ref{Kaczmarz algorithm} seems worse even if $\eta(A) = O(\kappa(A))$. This seems reasonable as usually $O(n)$ rows of $A$ are enough to solve over-determined linear systems. 
\end{enumerate}

The above theoretical results indicate that Algorithm \ref{Kaczmarz algorithm} is usually less efficient in practice. This is not surprising since it uses all the rows of the input matrix when computing a new useful vector. When the linear system is over-determined, this is undesirable. To overcome this problem, in practice, we can modify this algorithm by first computing $\x_0,\ldots,\x_{M-1}$ and then checking the quality of their average as an approximate solution. If it is not a good approximation, then we can restart this process from the average. From our numerical tests in Section \ref{section:Numerical experiments}, this modified algorithm can be faster than SV's algorithm and can compete with randomized block Kaczmarz algorithms.

\subsection{Related work}

There are many research papers on the Kaczmarz method, we list below the most related ones.
A similar idea was recently used by Steinerberger. In \cite{steinerberger2020surrounding}, Steinerberger focused on the solving of nonsingular square linear systems using the randomized Kaczmarz method by setting $\alpha_k = 2$ as well. The randomness is similar to that of Strohmer-Vershynin's algorithm. Steinerberger proved that 
\be
\label{Steinerberger-eq1}
\E
\left\|\frac{1}{N} \sum_{j=0}^{N-1} \x_j - A^{-1}\b \right\| \leq \varepsilon \|\x_0 - A^{-1}\b\|
\ee
when $N=O(\kappa_s^2(A)/\varepsilon^2)$. So by Markov’s inequality, with high probability the average of the first $N$ vectors can be viewed as an approximation of the solution.   Note that in our second algorithm, we have a similar estimate
\be
\label{Steinerberger-eq2}
\left\|\frac{1}{N} \sum_{j=0}^{N-1} \x_j - A^{-1}\b \right\| \leq \varepsilon \|\x_0 - A^{-1}\b\|
\ee
when $N=O(n\eta(A)/\varepsilon)$. The $\x_j$'s in (\ref{Steinerberger-eq1}), (\ref{Steinerberger-eq2}) have different meanings. Different from Steinerberger's result, our result is deterministic.
In \cite{steinerberger2020surrounding}, Steinerberger posed an open question of finding a better and deterministic way to approximate the solution from the samples. Our algorithm can be viewed as an answer to this open question -- we sample points in a deterministic way and sub-sample smartly so that the samples have predictable structures on the sphere and their average is unbiased with respect to the solution regardless of the linear system used for creating the samples. 

Choosing $\alpha_k=2$ in (\ref{eq1}) is not new. It was highlighted in another famous row projection method proposed by Cimmino in 1938 \cite{Cimmino}. In Cimmino's method, from $\x_k$, we perform $m$ reflections (\ref{eq1-0}) for $i_k\in\{1,2,\ldots,m\}$ and use their average to define $\x_{k+1}$. It \cs{was} shown by Cimmino that the sequence $\{\x_0,\x_1,\ldots\}$ converges to a solution under the mild assumption that ${\rm Rank}(A)\geq 2$. In our algorithm, $\x_{k+1}$ is obtained by one reflection and the sequence $\{\x_0,\x_1,\ldots\}$ locates on a sphere centered on a solution. \cs{The} Cimmino method is known to be more amenable to parallelism than \cs{the} Kaczmarz method. However, the required number of iterations for Cimmino's method could be large. Compared to Equation (\ref{neq1}), \cs{the randomized block} Kaczmarz method seems to be a generalization of the Cimmino method, except that we are now allowed to set $\alpha_k = 2$. For more on the connection between the Kaczmarz method and the Cimmino method, we refer to \cite{ansorge1984connections,benzi2004gianfranco}.

The idea of using cyclic subsequence $\{\x_{jm+k}:j=0,1,\ldots,k=0,1,\ldots,m-1\}$ in the Kaczmarz method to solve inconsistent linear systems has been studied decades ago, for example see \cite{tanabe1971projection,censor1983strong,eggermont1981iterative}. In those papers, the authors investigated the problem of using what kind of relaxation parameters the cyclic subsequences will converge to the least-square solution. In \cite{eggermont1981iterative},  Eggermont,  Herman, and Lent proved that if the relaxation parameters are periodic, then the cyclic subsequences converge to the least-square solution. It was  shown in \cite{censor1983strong} that when the relaxation parameters tend to zero, the limits of the cyclic subsequences approach the least-square solution. 

The original Kaczmarz algorithm \cite{karczmarz1937angenaherte} is deterministic. There are also some other versions of deterministic Kaczmarz algorithms. We name a few here. \csp{For more, we refer to \cite{petra2016single,niu2020greedy,censor1981row,feichtinger1992new} and the references therein.} In 1954, Agmon \cite{agmon1954relaxation},  Motzkin and Schoenberg \cite{motzkin1954relaxation} extended the Kaczmarz algorithm to solve linear inequalities by orthogonally projecting the current solution onto the chosen halfspace. In 1957, Hildreth \cite{hildreth1957quadratic} also proposed a similar deterministic algorithm to solve linear inequalities to find the closest point in the solution set to a given point. It is worth mentioning that Hildreth's algorithm can be reduced to the original Kaczmarz algorithm. In \cite{chen2021fast}, Chen and Huang proposed a deterministic block Kaczmarz method for solving the least-squares problem which is competitive with randomized block Kaczmarz methods. \csp{In \cite{nutini2016convergence}, Nutini et al introduced two greedy selection rules that make the Kaczmarz method deterministic. The greedy selection rules give faster convergence rates, and the costs are similar to the randomized ones in some applications.}

\subsection{Outline of this paper}

In Section \ref{section:Preliminaries}, we prove some lemmas that will be used in our proofs of the main theorems.
In Section \ref{section:Consistent linear systems}, we prove our main results for consistent linear systems.
In Section \ref{section:Inconsistent linear systems}, we focus on inconsistent linear systems and investigate deeper on the Kaczmarz method.
Finally, in Section \ref{section:Numerical experiments} we compare different Kaczmarz methods numerically. 

\section{Preliminaries}
\label{section:Preliminaries}

Throughout this paper, we use $\{\e_1,\ldots,\e_n\}$ to denote the standard basis of $\mathbb{R}^n$, i.e., for $\e_j$, the $j$-th entry is 1 and all other entries are 0. The $n\times n$ identity matrix will be denoted as $I_n$. For the linear system $A\x = \b$, we always assume that $A$ has no zero rows. All vectors that appear in this paper are assumed to be given in column forms. So when we say the $i$-th row of $A$, we shall use $A_i^{\TT}$ because $A_i$ refers to a column vector. {The operator norm of $A$ is denoted as $\|A\|$. It is the maximal singular value of $A$.  With $\|A\|_F$, we mean the Frobenius norm, which is the square root of the sum of the absolute squares of the elements of $A$. The condition number refers to the ratio of the largest singular value to the smallest nonzero singular value. For any two vectors $\a,\b$, their inner product is denoted as $\langle \a|\b\rangle$. The transpose of $A$ is denoted as $A^\TT$.} We remark again that the notation (\ref{reflection}), (\ref{prod of reflections}) will be used frequently in this paper. 

In this section, we aim to prove some preliminary lemmas that will be used in the next section.
The following two lemmas are useful in the proofs of Theorems \ref{main thm} and \ref{main thm2}.

\begin{lem}
\label{key lemma}
Let $\theta_1,\ldots,\theta_p \in (0,2\pi)$ be $p$ distinct parameters and $\{\v_1,\ldots,\v_p \} \subseteq \mathbb{R}^n$ be a set of orthogonal vectors, then the dimension of the vector space spanned by $\{\sum_{k=1}^p (1-e^{ij\theta_k }) \v_k:j\in \mathbb{N}\}$ is $p$. Moreover, they lie on a sphere centered at $\sum_{k=1}^p \v_k$.
\end{lem}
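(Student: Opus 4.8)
The plan is to split the statement into two parts: first the dimension count, then the ``lie on a common sphere'' claim. For the dimension, set $\w_j := \sum_{k=1}^p (1-e^{ij\theta_k})\v_k$ for $j \in \mathbb{N}$, and consider the vectors $\w_1, \w_2, \ldots, \w_p$. Since the $\v_k$ are orthogonal (hence linearly independent, as none is zero — they form an orthogonal set of $p$ vectors, so $p \le n$), the span of $\{\w_j : j \in \mathbb{N}\}$ is contained in $\mathrm{span}\{\v_1,\ldots,\v_p\}$, which has dimension $p$; so it suffices to show $\w_1,\ldots,\w_p$ are independent. Writing $\w_j = \sum_k c_{jk}\v_k$ with $c_{jk} = 1 - e^{ij\theta_k}$, independence of $\w_1,\ldots,\w_p$ is equivalent (using independence of the $\v_k$) to the $p\times p$ matrix $C = (c_{jk})_{1\le j,k\le p}$ being nonsingular.

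\textbf{The nonsingularity of $C$.} This is the main obstacle, and it is essentially a Vandermonde-type argument. The entries are $c_{jk} = 1 - z_k^{j}$ where $z_k := e^{i\theta_k}$, and the $z_k$ are distinct points on the unit circle, all different from $1$ (since $\theta_k \in (0,2\pi)$). I would argue that $\det C \ne 0$ by viewing the columns as evaluations: if $\sum_k \lambda_k(1 - z_k^j) = 0$ for $j = 1,\ldots,p$, then setting $\mu_0 := -\sum_k \lambda_k$ and $\mu_k := \lambda_k$ we get $\sum_{k=0}^{p} \mu_k z_k^j = 0$ for $j=1,\ldots,p$ with $z_0 := 1$; but $z_0, z_1, \ldots, z_p$ are $p+1$ distinct points, so the $(p+1)\times(p+1)$ Vandermonde-like system forces all $\mu_k = 0$ after noting the relation $\sum_k \mu_k = 0$ (which is exactly the $j=0$ row, implicitly). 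More carefully: the vectors $(z_k^1, z_k^2, \ldots, z_k^p)$ for $k=0,\ldots,p$ together with the constraint coming from $\mu_0 = -\sum_{k\ge 1}\mu_k$ — one cleanly finishes this by observing that a nonzero polynomial of degree $\le p$ has at most $p$ roots, or directly by a Vandermonde determinant computation on the $z_k$. I would present the clean version: the map sending $(\lambda_1,\ldots,\lambda_p)$ to the vector $\big(\sum_k \lambda_k(1-z_k^j)\big)_{j=1}^p$ has trivial kernel because a nonzero trigonometric/Laurent polynomial $\sum_{k=0}^p \mu_k x^k$ with $\sum \mu_k = 0$ cannot vanish at $p$ distinct nonzero points $z_1,\ldots,z_p$ unless it is identically zero.

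\textbf{The common sphere.} For the last claim, note that $\w_j - \sum_{k=1}^p \v_k = -\sum_{k=1}^p e^{ij\theta_k}\v_k$. Since the $\v_k$ are \emph{orthogonal}, by the Pythagorean theorem $\big\|\sum_k e^{ij\theta_k}\v_k\big\|^2 = \sum_k |e^{ij\theta_k}|^2\|\v_k\|^2 = \sum_k \|\v_k\|^2$, which is independent of $j$. Hence every $\w_j$ lies at the fixed distance $\big(\sum_k\|\v_k\|^2\big)^{1/2}$ from the point $\sum_{k=1}^p\v_k$, i.e.\ on the sphere centered there with that radius. (One subtlety: the $\v_k$ are real vectors but the coefficients $e^{ij\theta_k}$ are complex; I would either work in $\mathbb{C}^n$ throughout with the Hermitian inner product, or split into real and imaginary parts — in the paper's intended application the combination $\w_j$ is real, being a difference of reflected real vectors, so the norm computation is unaffected and I will just remark on this.) This completes the proof once the determinant step is in place; the dimension and sphere parts are short.
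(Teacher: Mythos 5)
Your proposal is correct, and it reaches the crucial Vandermonde fact by a noticeably different route than the paper. The paper manipulates the infinite coefficient matrix $(1-e^{ij\theta_k})_{k,j}$ directly: it factors $1-e^{ij\theta_k}=(1-e^{i\theta_k})(1+e^{i\theta_k}+\cdots+e^{i(j-1)\theta_k})$, divides each row by the nonzero factor $1-e^{i\theta_k}$, applies column operations to reach the matrix $(e^{ij\theta_k})$, and then establishes full row rank by an induction on $p$ with further explicit row/column reductions. You instead restrict to the first $p$ vectors $\w_1,\ldots,\w_p$, reduce their independence to nonsingularity of the square matrix $(1-z_k^j)_{j,k=1}^p$, and absorb the constant term by adjoining the node $z_0=1$: a kernel relation then becomes a Vandermonde system at the $p+1$ distinct points $1,z_1,\ldots,z_p$ (the $j=0$ equation being exactly your definition of $\mu_0$), which forces all $\mu_k=0$. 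This avoids the paper's induction, quotes the standard Vandermonde nonsingularity rather than reproving it, and yields the mild strengthening that $\w_1,\ldots,\w_p$ themselves form a basis of the span, whereas the paper's rank argument only shows that some $p$ of the $\w_j$ do. One small wording caveat: your final sentence about a polynomial $\sum_{k}\mu_k x^k$ ``vanishing at $z_1,\ldots,z_p$'' does not quite describe the conditions $\sum_k\mu_k z_k^j=0$, which are moment conditions rather than evaluations of that polynomial; the clean finish is the Vandermonde determinant on the nodes $1,z_1,\ldots,z_p$ (which you also propose), or equivalently a Lagrange-interpolation/transpose argument. Your treatment of the sphere claim, namely that $\w_j-\sum_k\v_k=-\sum_k e^{ij\theta_k}\v_k$ has constant norm $\big(\sum_k\|\v_k\|^2\big)^{1/2}$ by orthogonality and $|e^{ij\theta_k}|=1$, is exactly the computation the paper dismisses as straightforward, and your remark about working in $\mathbb{C}^n$ with the Hermitian inner product is the right way to make sense of the statement's mixture of real vectors and complex coefficients.
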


\begin{proof}
We consider the following matrix with $p$ columns:
\[
M := \begin{pmatrix}
1-e^{i\theta_1} & 1-e^{2i\theta_1} & 1-e^{3i\theta_1} & \cdots  \\
\vdots & \vdots  & \vdots & \vdots \\
1-e^{i\theta_p} & 1-e^{2i\theta_p} & 1-e^{3i\theta_p} & \cdots
\end{pmatrix}.
\]
We aim to prove that $M$ is nonsingular.

Since $1-e^{ij\theta_k} = (1-e^{i\theta_k}) (1+e^{i\theta_k}+\cdots+e^{i(j-1)\theta_k})$ and $e^{i\theta_k}\neq 1$, it follows that the rank of $M$ is equal to the rank of the following matrix, which is obtained by dividing the $k$-th row of $M$ with $(1-e^{i\theta_k})$:
\[
\begin{pmatrix}
1 & 1+e^{i\theta_1} & 1+e^{i\theta_1}+e^{2i\theta_1} & \cdots \\
\vdots & \vdots & \vdots & \vdots\\
1 & 1+e^{i\theta_p} & 1+e^{i\theta_p}+e^{2i\theta_p} & \cdots
\end{pmatrix}.
\]
Via some column operations, which do not change the rank, the above matrix can be transformed to
\[
\begin{pmatrix}
1 & 0 & 0 & \cdots \\
1 & e^{i\theta_2}-e^{i\theta_1} & e^{2i\theta_2}-e^{2i\theta_1} & \cdots \\
\vdots & \vdots & \vdots & \vdots\\
1 & e^{i\theta_p}-e^{i\theta_1} & e^{2i\theta_p}-e^{2i\theta_1} & \cdots
\end{pmatrix}.
\]

Now the singularity of $M$ is equivalent to the singularity of the following matrix with $(p-1)$ columns
\[
\begin{pmatrix}
e^{i\theta_2}-e^{i\theta_1} & e^{2i\theta_2}-e^{2i\theta_1} & \cdots \\
\vdots & \vdots & \vdots\\
e^{i\theta_p}-e^{i\theta_1} & e^{2i\theta_p}-e^{2i\theta_1} & \cdots
\end{pmatrix}.
\]
Dividing the $j$-th row with $(e^{i\theta_j}-e^{i\theta_1})$, which is not zero by assumption, we obtain the following matrix
\[
\begin{pmatrix}
1 & e^{i\theta_2}+e^{i\theta_1} & e^{2i\theta_2}+e^{i\theta_2}e^{i\theta_1}+e^{2i\theta_1} & \cdots \\
\vdots & \vdots & \vdots & \vdots \\
1 & e^{i\theta_p}+e^{i\theta_1} & e^{2i\theta_p}+e^{i\theta_p}e^{i\theta_1}+e^{2i\theta_1} & \cdots
\end{pmatrix}
\xrightarrow[]{\text{Column operations}}
\begin{pmatrix}
1 & e^{i\theta_2} & e^{2i\theta_2} & \cdots \\
\vdots & \vdots & \vdots & \vdots \\
1 & e^{i\theta_p} & e^{2i\theta_p} & \cdots
\end{pmatrix}.
\]
Finally, the claimed result about the dimension follows directly by induction on $p$. The second claim is straightforward.
\end{proof}

The following result is a generalization of Brady-Watt's formula \cite{brady2006products}. Although they only considered the full row-rank case, their result is true generally. Below, we give a simple proof by induction.

\begin{lem}[Brady-Watt's formula]
\label{lem:Brady-Watt formula}
Let $A$ be an $m\times n$ matrix with no zero rows, \cs{$\R_A$ be given in (\ref{prod of reflections})}, then
\[
\R_A = I_n - 2 A^{\TT}  W^{-1} A,
\]
where $W$ is the lower-triangular matrix such that $W+W^{\TT} = 2 AA^{\TT}$.
\end{lem}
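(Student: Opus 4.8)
The plan is to prove the identity by induction on the number of rows $m$, peeling off the last row at each step. It is convenient to first record the structure of $W$: since $(AA^T)_{ii}=\|A_i\|^2$ and $W$ is lower-triangular with $W+W^T=2AA^T$, we are forced to have $W_{ii}=\|A_i\|^2$, $W_{ij}=2A_i^TA_j$ for $i>j$, and $W_{ij}=0$ for $i<j$; such a $W$ exists and is unique. In particular $\det W=\prod_i\|A_i\|^2>0$ (there are no zero rows), so $W$ is invertible and the statement is well posed, with no rank hypothesis on $A$ needed. For the base case $m=1$ we have $A=A_1^T$, $W=\|A_1\|^2$, and $I_n-2A^TW^{-1}A=I_n-2A_1A_1^T/\|A_1\|^2=\R_1=\R_A$.

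For the inductive step, write $A=\sm{B\\ A_m^T}$, where $B$ collects the first $m-1$ rows, so that $\R_A=\R_m\R_B$. Partitioning $AA^T$ conformally and imposing lower-triangularity forces
\[
W=\begin{pmatrix} W_B & 0\\ 2A_m^TB^T & \|A_m\|^2\end{pmatrix},
\]
where the $(1,1)$ block $W_B$ is lower-triangular and satisfies $W_B+W_B^T=2BB^T$; thus $W_B$ is precisely the matrix attached to $B$, and the inductive hypothesis $\R_B=I_n-2B^TW_B^{-1}B$ applies. Writing $c:=2BA_m$ and $d:=\|A_m\|^2$, the explicit block lower-triangular inverse is $W^{-1}=\sm{W_B^{-1} & 0\\ -d^{-1}c^TW_B^{-1} & d^{-1}}$.

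It then remains to expand both sides and compare. Multiplying out the block products and using $c^T=2A_m^TB^T$ gives
\[
I_n-2A^TW^{-1}A = I_n-2B^TW_B^{-1}B-2d^{-1}A_mA_m^T+4d^{-1}A_mA_m^TB^TW_B^{-1}B,
\]
while expanding $\R_m\R_B=(I_n-2d^{-1}A_mA_m^T)(I_n-2B^TW_B^{-1}B)$, with $\R_B$ replaced via the inductive hypothesis, produces exactly the same four terms. This closes the induction. The computation is mechanical; the only point requiring care is the block bookkeeping — recognizing that the $(1,1)$ block of $W$ is forced to equal $W_B$ so that induction can be applied to it, that the $(2,1)$ block is $c^T=2A_m^TB^T$, and that the cross term $A_mc^TW_B^{-1}B=2A_mA_m^TB^TW_B^{-1}B$ is exactly what matches the cross term in $\R_m\R_B$. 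I do not expect any genuine obstacle beyond keeping the block multiplications straight.
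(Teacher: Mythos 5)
Your proof is correct and follows essentially the same route as the paper's: induction on the number of rows, a block lower-triangular decomposition of $W$ with explicit block inverse, and a direct expansion matching $\R_m\R_B$ against $I_n-2A^TW^{-1}A$ (the paper merges two arbitrary row blocks per step rather than peeling a single row, but the computation is the same). Your explicit remark that $\det W=\prod_i\|A_i\|^2\neq 0$, so $W^{-1}$ exists without any rank hypothesis, is a small point the paper leaves implicit and is worth keeping.
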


\begin{proof}
We prove it by induction on $m$.
When $m=1$, the result is clearly true. Now assume that
\[\R_A = I_n - 2 A^{\TT}  W_1^{-1} A, \quad
\R_B = I_n - 2 B^{\TT}  W_2^{-1} B,
\]
where $W_1,W_2$ are lower-triangular matrices such that $W_1+W_1^{\TT} = 2 AA^{\TT}$ and $W_2+W_2^{\TT} = 2 BB^{\TT}$. Let $W$ be the lower-triangular matrix such that
\[
W+W^{\TT} = 2 
\begin{pmatrix}
A \\
B 
\end{pmatrix}
\begin{pmatrix}
A^{\TT} & B^{\TT}
\end{pmatrix}
= \begin{pmatrix}
2AA^{\TT} & 2AB^{\TT} \\
2BA^{\TT} & 2BB^{\TT}
\end{pmatrix}
= \begin{pmatrix}
W_1+W_1^{\TT} & 2AB^{\TT} \\
2BA^{\TT} & W_2+W_2^{\TT} 
\end{pmatrix}.
\]
Namely,
\[
W = \begin{pmatrix}
W_1 & 0 \\
2BA^{\TT} & W_2
\end{pmatrix}.
\]
It is easy to show that
\[
W^{-1} = \begin{pmatrix}
W_1^{-1} & 0 \\
-2W_2^{-1}BA^{\TT}W_1^{-1} & W_2^{-1}
\end{pmatrix}.
\]
Denote $C = \begin{pmatrix}
A \\
B 
\end{pmatrix}$.
We then can check that $\R_{C} =
\R_B \R_A = I_n - 2 C^{\TT} W^{-1} C$, where $W$ is the lower-triangular matrix such that $W+W^{\TT} = 2 CC^{\TT}$.
Finally, the claimed result follows by induction.
\end{proof}

As a corollary of Brady-Watt's formula, we have the following result. We below present a new proof, which will be helpful in the estimate of the convergence rate in the next section.

\begin{lem}
\label{key lemma 2}
Assume that $A$ is an $n\times n$ matrix without zero rows, \cs{ and $\R_A$ is given in (\ref{prod of reflections})}, then
\[
\det(\R_A-I_n)=(-2)^n \det(A)^2\prod_{i=1}^n \|A_i\|^{-2}.
\]
Consequently, 1 is not an eigenvalue of $\R_A$ if $A$ is nonsingular.
\end{lem}

\begin{proof}
From the definition of $\R_A$, it suffices to consider the case that each row of $A$ has a unit norm. We prove the result by induction on $n$. Assume that $A^{\TT}=Q L$ is the QL decomposition of $A^{\TT}$, where $Q$ is orthogonal and $L$ is lower triangular. Then the $i$-th row of $A$, when written in a column vector, equals $A_i = Q L_i$, where $L_i$ is the $i$-th column of $L$. As a result, we have
\[
\R_A = Q \prod_{i=1}^n (I_n - 2 L_i L_i^{\TT}) Q^{\TT} .
\]
The eigenvalues and the determinant of $\R_A$ are the same as those of $\prod_{i=1}^n (I - 2 L_i L_i^{\TT})$. Hence, without loss of generality, we can assume that $A$ is a lower triangular matrix satisfying that each row has a unit norm. 

When $n=1$, we have $A=1$ and $\R_A = -1$. Thus, $\det(\R_A - I_1) = -2$. When $n=2$,  
$A = \begin{pmatrix}
1 & 0 \\
a & b \\
\end{pmatrix}$ where $a^2+b^2=1$. Thus
$
\R_A = \begin{pmatrix}
2a^2-1 & -2a b \\
2ab    & 1-2b^2 \\
\end{pmatrix}
$. We now can directly check that $\det(\R_A-I_2)=4 b^2$. When $n\geq 3$, we denote the last row of $A$ as $(\v^{\TT}, w)$, where $\v \in \mathbb{R}^{n-1}$ is a column vector and $w\in \mathbb{R}^*$. For convenience, we denote the submatrix with the last row and column of $A$ removed as $B$. Then it is easy to show that
\bes
\R_A = \begin{pmatrix}
(I_{n-1}-2\v\v^{\TT})\R_B & -2w\v \\
-2w\v^{\TT} \R_B    & 1-2w^2 \\
\end{pmatrix}.
\ees
Therefore,
\beas
\R_A-I_n &=& \begin{pmatrix}
(I_{n-1}-2\v\v^{\TT})\R_B-I_{n-1} & -2w\v \\
-2w\v^{\TT} \R_B    & -2w^2 \\
\end{pmatrix} \\
&=&
\begin{pmatrix}
I_{n-1} & 0 \\
0 & w \\
\end{pmatrix}
\begin{pmatrix}
(I_{n-1}-2\v\v^{\TT})\R_B-I_{n-1} & \v \\
-2\v^{\TT} \R_B    & 1 \\
\end{pmatrix}
\begin{pmatrix}
I_{n-1} & 0 \\
0 & -2w \\
\end{pmatrix}.
\eeas
It implies
\[
\det(\R_A - I_n) = -2w^2 \det\begin{pmatrix}
(I_{n-1}-2\v\v^{\TT})\R_B-I_{n-1} & \v \\
-2\v^{\TT} \R_B    & 1 \\
\end{pmatrix}
=-2w^2 \det(\R_B-I_{n-1}).
\]
By induction, we have $\det(\R_A-I_n)=(-2)^n \det(A)^2$.
\end{proof}

The next lemma will be used to estimate an upper bound of $\eta(A)$ in terms of the condition number of $A$. The triangular truncation operator $T$ is defined as an $m\times m$ matrix such that $T_{ij}=1$ if $i\geq j$ and 0 otherwise. Denote
\[
K_m := \max \left\{ \frac{\|A \circ T\|}{\|A\|}: A \text{ is } m\times m \text{ and nonzero}, \circ \text{ is the Hadamard product}\right\}.
\]

\begin{lem}[Theorem 1 of \cite{angelos1992triangular}]
\label{lem: triangular truncation operator}
For $m\geq 2$,
\[
\left|\frac{K_m}{\log m} - \frac{1}{\pi}\right| \leq \left(1+\frac{1}{\pi}\right) \frac{1}{\log m}.
\]
\end{lem}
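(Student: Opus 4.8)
The number $K_n$ is, by definition, the operator norm of the triangular truncation map $B\mapsto B\circ T$ acting on the space of $n\times n$ matrices equipped with the spectral norm. The plan is to estimate this norm by Fourier analysis on the circle, using that conjugation by a diagonal unitary is an isometry. Put $D_\theta=\diag(e^{i\theta},e^{2i\theta},\dots,e^{in\theta})$, so that $(D_\theta B D_\theta^{*})_{jk}=e^{i(j-k)\theta}B_{jk}$. Then for any $\phi\in L^1([-\pi,\pi])$ the map
\[
 B\ \longmapsto\ \frac{1}{2\pi}\int_{-\pi}^{\pi}\phi(\theta)\,D_\theta B D_\theta^{*}\,d\theta
\]
is the Schur (Hadamard) multiplier by the Toeplitz matrix with $(j,k)$ entry $\widehat\phi(k-j)$, where $\widehat\phi(m)=\frac{1}{2\pi}\int_{-\pi}^{\pi}\phi(\theta)e^{-im\theta}\,d\theta$; since $\|D_\theta B D_\theta^{*}\|=\|B\|$, this multiplier has operator norm at most $\frac{1}{2\pi}\int_{-\pi}^{\pi}|\phi(\theta)|\,d\theta$. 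Thus, if we can find $\phi=\phi_n$ with $\widehat\phi(m)=1$ for $-(n-1)\le m\le 0$ and $\widehat\phi(m)=0$ for $1\le m\le n-1$, then $B\circ T$ equals the above average and $K_n\le \|\phi_n\|_{L^1(d\theta/2\pi)}$ (one may also split off the diagonal, $T=T^{0}+I\circ I$, using $\|B\circ I\|\le\|B\|$, to reduce to a $\phi_n$ supported on the strictly-lower modes). The same representation, restricted to test matrices that are themselves Toeplitz, will supply the matching lower bound.

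For the \emph{upper bound} I would build $\phi_n$ by regularizing the distributional kernel $H(\theta)=(1-e^{-i\theta})^{-1}$, whose formal Fourier series is $\sum_{m\le 0}e^{im\theta}$ and which behaves like $1/|\theta|$ near $\theta=0$. Concretely, let $\phi_n$ agree with $H$ on the arc $\{1/n\le|\theta|\le\pi\}$ and be replaced by a bounded function on $\{|\theta|<1/n\}$, and then add a trigonometric polynomial of degree $<n$, of $L^1$-norm $O(1)$ independent of $n$, to force $\widehat{\phi_n}(m)$ to take exactly the required values for $|m|\le n-1$. Because $2\sin(\theta/2)=|1-e^{-i\theta}|$ and $\int\frac{d\theta}{\sin(\theta/2)}=4\log\tan(\theta/4)+\mathrm{const}$, one gets $\big\|H\,\mathbf{1}_{\{|\theta|\ge 1/n\}}\big\|_{L^1(d\theta/2\pi)}=\frac1\pi\log n+O(1)$, and hence $\|\phi_n\|_{L^1(d\theta/2\pi)}=\frac1\pi\log n+O(1)$. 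The remaining task is to carry the $O(1)$ terms (from the cut-off, the correction polynomial, and, if used, the separated diagonal, whose multiplier has norm exactly $1$) carefully enough to land on $K_n\le\frac1\pi\log n+1+\frac1\pi$.

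For the \emph{lower bound} I would restrict $B\mapsto B\circ T$ to Toeplitz matrices $B=(c_{k-j})_{j,k=1}^n$. Always $\|B\|\le\|f\|_{L^\infty}$ with $f(\theta)=\sum_{|m|\le n-1}c_m e^{im\theta}$, while $B\circ T$ is again Toeplitz, with symbol the one-sided truncation $\sum_{m\le 0}c_m e^{im\theta}$; and for a well-chosen symbol — the standard near-extremizer of the (conjugate) Dirichlet Lebesgue constant, e.g.\ a partial sum of $\sum \tfrac1m e^{im\theta}$ — the norm of the truncated Toeplitz matrix is within $o(1)$ of the $L^\infty$-norm of its symbol while $\|f\|_\infty=O(1)$. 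The classical asymptotics $\|\widetilde D_N\|_{L^1(d\theta/2\pi)}=\frac2\pi\log N+O(1)$ for conjugate Dirichlet kernels then yield $\|B\circ T\|/\|B\|\ge\frac1\pi\log n-1-\frac1\pi$, which together with the previous paragraph gives the two-sided bound.

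The main obstacle, in both directions, is that an $O(\log n)$ estimate is routine (Gohberg–Krein, Kwapień–Pełczyński), but pinning the constant to exactly $1/\pi$ \emph{with} the explicit remainder $(1+\tfrac1\pi)/\log n$ demands genuinely sharp, non-asymptotic control of the near-extremal objects: the detailed $L^1$-analysis of the regularized kernel $\phi_n$ and of its low-degree correction for the upper bound, and the matching choice of extremal polynomial (and the rate at which a finite Toeplitz matrix's norm approaches its symbol's sup-norm) for the lower bound. What makes the matrix constant coincide with the one-dimensional Riesz-projection constant $1/\pi$ is that the passage scalars~$\leftrightarrow$~matrices is lossless — conjugation by the $D_\theta$ is isometric and Toeplitz matrices are asymptotically extremal — so the entire difficulty is the bookkeeping of the $O(1)$ terms.
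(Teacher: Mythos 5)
The paper itself contains no proof of this lemma: it is imported verbatim as Theorem~1 of the cited reference \cite{angelos1992triangular}, so the only question is whether your sketch actually establishes the stated inequality. It does not, and you essentially say so yourself. The lemma is not the soft asymptotic $K_n=\frac{1}{\pi}\log n+O(1)$ but a sharp, non-asymptotic two-sided bound, $\frac{1}{\pi}\log n-\bigl(1+\frac{1}{\pi}\bigr)\le K_n\le \frac{1}{\pi}\log n+\bigl(1+\frac{1}{\pi}\bigr)$ for every $n\ge 2$, and the explicit constant $1+\frac{1}{\pi}$ is the entire content. Your closing paragraph defers exactly this (``the bookkeeping of the $O(1)$ terms''), which means the theorem as stated is not proved; what you have is a plausible roadmap toward $K_n/\log n\to 1/\pi$.

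Concretely, the gaps are these. In the upper bound, the Schur-multiplier reduction (averaging $D_\theta B D_\theta^{*}$ against $\phi$ and bounding the multiplier norm by $\|\phi\|_{L^1(d\theta/2\pi)}$) is sound, but the key step --- that after replacing the kernel $H(\theta)=(1-e^{-i\theta})^{-1}$ by a bounded function on $\{|\theta|<1/n\}$ one can restore the exact Fourier coefficients for all $|m|\le n-1$ by adding a trigonometric polynomial of degree $<n$ whose $L^1$ norm is not just $O(1)$ but small enough to land on $1+\frac{1}{\pi}$ --- is asserted with no construction; an $L^1$ bound on a polynomial with $2n-1$ prescribed coefficients does not follow from coefficient estimates alone, and producing such a corrector with explicit constants is precisely the hard part. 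In the lower bound, the claim that the $n\times n$ truncated Toeplitz matrix has operator norm within $o(1)$ of the sup norm of its symbol is unjustified when the symbol depends on $n$ and has sup norm $\asymp\log n$: finite Toeplitz sections only attain the symbol norm in the limit, and a quantitative rate is exactly what is missing; moreover the $L^1$ asymptotics of the conjugate Dirichlet kernel control the norm of the conjugation operator on $C(\mathbb{T})$, not the quantity you need here. A cleaner standard route is to take $A_{jk}=1/(j-k)$ off the diagonal (so $\|A\|\le\pi$, since $A$ is a compression of the Laurent operator with symbol $i(\pi-\theta)$) and test $A\circ T$ against the normalized all-ones vector, which gives $\|A\circ T\|^2\ge \frac{1}{n}\sum_{j} H_{j-1}^2$ with $H_j$ the harmonic numbers, hence $\|A\circ T\|\ge \log n-O(1)$; but even with this fix, matching the specific constant $1+\frac{1}{\pi}$ in both directions requires the careful explicit estimates carried out in \cite{angelos1992triangular}, which your proposal does not supply.
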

The above result suggests that $\|A\circ T\|=O( \|A\| \log m)$ for any $m\times m$ matrix $A$.

\section{Consistent linear systems}
\label{section:Consistent linear systems}

We now turn our attention to the main results of solving linear systems $A \x = \b$ by the Kaczmarz method. \cs{First, we explicitly describe our main  algorithm for solving consistent linear systems in Algorithm \ref{Kaczmarz algorithm} below}. Then, we analyze its correctness and efficiency. \cs{To better understand the geometric structures of the algorithm}, we first focus on the case that $A$ is invertible. We then extend the results to general consistent linear systems. In this section, the notation (\ref{reflection}), (\ref{prod of reflections}), (\ref{sphere 0}), (\ref{sphere 1}) will be widely used. 

\begin{algorithm}[H]
\caption{{Deterministic iterative reflection algorithm for solving consistent linear systems}}
\label{Kaczmarz algorithm}
\begin{algorithmic}[1]
\Require A consistent linear system $A\x=\b$, an arbitrarily chosen initial vector $\x_0$ \cs{and an accuracy $\varepsilon\in[0,1]$}.

\Ensure \cs{Output $\tilde{\x}$ such that $\|A\tilde{\x} - \b\| \leq \varepsilon$.}



\State For $k=0,1,2,\cdots$, compute
\be \label{our procedure}
\x_{k+1} = \x_k + 2 \frac{b_{i_k} -A_{i_k}^{\TT}\x_k}{\|A_{i_k}\|^2}   A_{i_k},
\ee
where $i_k= (k\mod m) + 1$.

\State \cs{Stop the iteration at step $m(M-1)$ if
$\|A\tilde{\x} - \b\| \leq \varepsilon$, where $\tilde{\x} = \frac{1}{M} \sum_{j=0}^{M-1} \x_{jm}$.}

\State \cs{Output $\tilde{\x}$.}
\end{algorithmic}
\end{algorithm}

\begin{rem}
\cs{
Before presenting all the results, we want to emphasize in a less rigorous way that Algorithm \ref{Kaczmarz algorithm} only works for consistent linear systems under the mild assumption that $m-r$ is even, where $r$ is the rank of $A$. 
}
\end{rem}


\subsection{Case 1: $A$ is invertible}

\cs{When $A$ is invertible, the solution is unique. This greatly simplifies the analysis of Algorithm \ref{Kaczmarz algorithm}. The geometric feature of the algorithm is also very clear in this case. Moreover, this case is a theoretical building block for the general case.}

Note that  $m=n$ in this case. As discussed in the introduction (see  (\ref{new-def-intro})),
in (\ref{sphere 0}), (\ref{sphere 1}), we know that $\S_0,\S_1$ are only two subsets of $ \mathbb{X} = \{\x_l:l=0,1,2,\ldots\}$. We can decompose $\mathbb{X}$ into a union of $2n$ subsets, which have similar structures to $\S_0,\S_1$. More precisely, for $k=0,1,\ldots,n-1$ and $j = 0,1$, define
\bea
\S_{kj} &:=& \{ \x_* + \R_k \cdots \R_1 \R^{2i+j} (\x_0-\x_*): i = 0,1,2,\ldots\} \\
&=& \{ \x_* + (\R_k\cdots \R_1 \R_n\cdots \R_{k+1})^{2i+j} (\x_k-\x_*): i = 0,1,2,\ldots\} \label{sphere 3}\\
&=& \{ \x_{k+(2i+j)n}: i = 0,1,2,\ldots\}, 
\eea
where $\x_k = \x_* + \R_k \cdots \R_1 (\x_0-\x_*)$. Hence we have
$\mathbb{X} =  \cup_{k=0}^{n-1} \cup_{j=0}^1 \S_{kj}$. This decomposition is a theoretical guarantee of our second algorithm below. \cs{Using this notation}, $\S_{0j}=\S_j$ for $j=0,1$. 
Recall that the sphere with minimal dimension such that $\S_j$ lies on it is called the {\em minimal sphere} supporting $\S_j$.

\begin{thm}
\label{main thm}
For $j=0,1$,  let $\c_j$ be the center of the minimal sphere supporting $\S_j$, then $\c_0+\c_1 = 2\x_*$. 
\end{thm}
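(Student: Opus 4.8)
The plan is to pin down the two centers explicitly. Write $\v:=\x_0-\x_*$, let $\v^-$ be the orthogonal projection of $\v$ onto $\ker(\R_A+I_n)$ (the $(-1)$-eigenspace of $\R_A$), and set $\v^+:=\v-\v^-$. I claim that $\c_0=\x_*+\v^-$ and $\c_1=\x_*-\v^-$, which yields $\c_0+\c_1=2\x_*$ at once. Recall that here $m=n$ and $A$ is invertible, so $\R_A$ is a real orthogonal matrix and, by Lemma~\ref{key lemma 2}, $1$ is not one of its eigenvalues; this fact will be used crucially.

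First I would put $\R_A^{2k}\v$ into a convenient normal form. Grouping the spectral decomposition of the orthogonal matrix $\R_A$ according to the distinct values $\mu_1,\dots,\mu_p\in(0,2\pi)$ taken by $2\theta_l\bmod 2\pi$ over the eigenvalues $e^{i\theta_l}\neq -1$ of $\R_A$ whose associated spectral component of $\v$ is nonzero, one obtains pairwise orthogonal vectors $\u_1,\dots,\u_p\in\C^n$, each orthogonal to $\v^-$, such that
\[
\R_A^{2k}\v \;=\; \v^- + \sum_{t=1}^{p} e^{\,ik\mu_t}\,\u_t \;=\; \v - \sum_{t=1}^{p}\bigl(1-e^{\,ik\mu_t}\bigr)\u_t ,
\qquad \sum_{t=1}^{p}\u_t=\v^+ .
\]
(The eigenvalue $-1$ is collected into $\v^-$, and by Lemma~\ref{key lemma 2} the eigenvalue $1$ does not occur, so each $\mu_t\in(0,2\pi)$.) Since $|e^{\,ik\mu_t}|=1$ and the $\u_t$ are orthogonal, $\|\R_A^{2k}\v-\v^-\|^2=\sum_t\|\u_t\|^2$ is independent of $k$, so $\S_0$ lies on the sphere centered at $\x_*+\v^-$.

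It remains to see this sphere is minimal, i.e.\ that $\x_*+\v^-$ lies in the affine hull of $\S_0$. Here I would invoke the rank part of Lemma~\ref{key lemma} — whose proof is a Vandermonde-type rank computation and so is valid over $\C$: the vectors $\{\sum_t(1-e^{\,ik\mu_t})\u_t:k\in\N\}$ span a $p$-dimensional space, and since the $\u_t$ are linearly independent this forces the coefficient tuples $(1-e^{\,ik\mu_t})_{t=1}^p$ to span all of $\C^p$; in particular $(1,\dots,1)$ is in their span, so $\v^+=\sum_t\u_t\in\operatorname{span}\{\R_A^{2k}\v-\v:k\in\N\}$. Consequently $\v^-=\v-\v^+$ lies in $\v+\operatorname{span}\{\R_A^{2k}\v-\v\}$, which is exactly the affine hull of $\{\R_A^{2k}\v\}$, of dimension $p$ by the same lemma; translating by $\x_*$, the point $\x_*+\v^-$ lies in the affine hull of $\S_0$, and therefore is the (unique) center of the minimal, namely $(p-1)$-dimensional, sphere supporting $\S_0$. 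For $\S_1=\{\x_*+\R_A^{2k}(\R_A\v):k\ge 0\}$ the same argument applies with $\v$ replaced by $\R_A\v$, giving $\c_1=\x_*+(\R_A\v)^-$; and since $\ker(\R_A+I_n)$ is $\R_A$-invariant with $\R_A$ acting there as $-I$, the orthogonal projection onto it commutes with $\R_A$, so $(\R_A\v)^-=-\v^-$. Hence $\c_1=\x_*-\v^-$ and $\c_0+\c_1=2\x_*$.

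The step I expect to be the crux is the minimality claim. That $\S_j$ lies on \emph{a} sphere with center $\x_*\pm\v^-$ is a one-line norm identity; showing this is the sphere of \emph{least} dimension amounts to showing that center lies in $\operatorname{aff}(\S_j)$, which is precisely where the rank statement of Lemma~\ref{key lemma} enters, and where it is essential that $1$ is not an eigenvalue of $\R_A$ (Lemma~\ref{key lemma 2}): an eigenvalue $1$ would contribute an extra direction fixed by every $\R_A^{2k}$, and then the splitting $\v=\v^-+\v^+$ — and with it the formula for $\c_j$ — would need to be amended.
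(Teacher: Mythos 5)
Your proof is correct and takes essentially the same route as the paper: decompose $\x_0-\x_*$ along the spectral decomposition of the orthogonal matrix $\R_A$, invoke Lemma \ref{key lemma 2} to exclude the eigenvalue $1$ and Lemma \ref{key lemma} to pin the minimal sphere's center at $\x_*$ plus (resp.\ minus) the $(-1)$-eigenspace component of $\x_0-\x_*$, and add the two centers. Your coordinate-free phrasing, the explicit affine-hull justification of minimality, and the treatment of $\S_1$ via $(\R_A\v)^-=-\v^-$ are somewhat tidier than the paper's in-coordinates computation (and avoid its ``assume $n$ odd'' reduction), but the key decomposition and the lemmas used are the same.
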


\begin{proof}
Let $\R_A = U^\dag D U$ be the eigenvalue decomposition of $\R_A$, where $U$ is unitary, $D$ is diagonal, and `` $\dag$ " refers to the conjugate transpose operator.
For convenience, we assume that $\x_0 = 0$. If $\x_0\neq 0$, we can consider $\x_0 - \y_j, \x_0 - \z_j$ instead. This is just a translation, which will not affect the final result.
\cs{With the above notation}, we have $\y_j = U^\dag (I-D^{2j}) U \x_*$ and $\z_j = U^\dag (I-D^{2j+1}) U \x_*$.

Denote $\tilde{\y}_j = U \y_j, \tilde{\z}_j = U \z_j,  \tilde{\x}_* = U \x_* = \sum_{k=1}^n x_k \e_k$, and $D={\rm diag}(e^{i\theta_1},\ldots, e^{i\theta_n})$, \cs{where $|\theta_i| \leq \pi$ for all $i$.} Then
\[
\tilde{\y}_j = \sum_{k=1}^n (1-e^{2ij\theta_k }) x_k \e_k, 
\quad
\tilde{\z}_j = \sum_{k=1}^n (1-e^{i(2j+1)\theta_k }) x_k \e_k.
\]
For convenience, we denote all the distinct eigen-phases $\{\theta_1,\ldots, \theta_n\}$ of $\R_A$ as $\{\phi_1, \phi_2, \pm \phi_3,  \ldots, \pm \phi_L\}$, where $\phi_1=\pi, \phi_2=0$ and $0< \phi_l < \pi$ for $3 \leq l \leq L$. Certainly, it may happen that $\pi,0\not\in \{\theta_1,\ldots, \theta_n\}$. However, this will not influence the following analysis.

We first determine the center of the minimal sphere supporting $\S_0$. Note that
\beas
\tilde{\y}_j &=& \sum_{k=1}^L (1-e^{2ij\phi_k }) \sum_{l:\theta_l = \phi_k } x_l \e_l + \sum_{k=1}^L (1-e^{-2ij\phi_k }) \sum_{l:\theta_l = -\phi_k } x_l \e_l \\
&=& \sum_{k=3}^L \left(1-e^{ij (2\phi_k) } \right) \sum_{l:\theta_l = \phi_k } x_l \e_l 
+
\sum_{k=3}^L \left(1-e^{ij (2(\pi -\phi_k)) } \right) \sum_{l:\theta_l = -\phi_k } x_l \e_l.
\eeas
\cs{Without loss of generality,} we assume that $\sum_{l:\theta_l = \phi_k } x_l \e_l \neq 0, \sum_{l:\theta_l = -\phi_k } x_l \e_l \neq 0$ for all $k\geq 3$, otherwise we just ignore them. Since $0< 2\phi_k, 2(\pi -\phi_k) < 2 \pi$, it then follows from Lemma \ref{key lemma} that the dimension of the vector space spanned by $ \{\tilde{\y}_j:j=0,1,2,\ldots\}$ is $2L-4$. Moreover, the center of the minimal sphere that supports this vector space is 
\be \label{eq1 in proof thm1}
\sum_{k=3}^L ~ \sum_{l:\theta_l = \phi_k } x_l \e_l 
+
\sum_{k=3}^L ~ \sum_{l:\theta_l = -\phi_k } x_l \e_l
=
\tilde{\x}_* - \sum_{l:\theta_l = \pi } x_l \e_l - \sum_{l:\theta_l = 0 } x_l \e_l .
\ee

Regarding the center of the minimal sphere supporting $\S_1$, we notice that
\[
\tilde{\z}_j = \sum_{k=1}^n (1-e^{i\theta_k }) x_k \e_k
+ \sum_{k=1}^n (1-e^{2ij\theta_k }) e^{i\theta_k } x_k \e_k.
\]
The first term is a translation and the second term can be analyzed in a similar way to $\tilde{\y}_j$. Thus the dimension of the vector space spanned by $\S_1$ is also $2L-4$, and the center of the minimal sphere that supports $ \{\tilde{\z}_j:j=0,1,2,\ldots\}$ is 
\bea  \label{eq2 in proof thm1}
&& \sum_{k=1}^n (1-e^{i\theta_k }) x_k \e_k
+ \sum_{k=1}^n e^{i\theta_k } x_k \e_k
- \sum_{l:\theta_l = \pi } e^{i\theta_l } x_l \e_l - \sum_{l:\theta_l = 0 } e^{i\theta_l } x_l \e_l \nonumber \\
&=& \tilde{\x}_* + \sum_{l:\theta_l = \pi } x_l \e_l - \sum_{l:\theta_l = 0 } x_l \e_l.
\eea

By Lemma \ref{key lemma 2}, $\theta_l \neq 0$ for all $l$. Namely, the third term in (\ref{eq1 in proof thm1}), (\ref{eq2 in proof thm1}) does not exist.\footnote{The reason that we keep the third term until the end is to simplify the analysis when dealing with general consistent linear systems.} Therefore, we have
\[
\c_0 = U^\dag \Big(\tilde{\x}_* - \sum_{l:\theta_l = \pi } x_l \e_l \Big) , \quad
\c_1 = U^\dag \Big(\tilde{\x}_* + \sum_{l:\theta_l = \pi } x_l \e_l \Big) .
\]
This implies $\c_0+\c_1 = 2\x_*$.
\end{proof}

As a corollary, we can compute the center of the minimal sphere that supports $\S_{kj}$ for any $k,j$. More precisely, we have the following result.

\begin{cor}
\label{cor1}
Assume that the unit eigenvectors of $\R_A$ corresponding to $-1$ are $\u_1,\ldots,\u_q$, then the center of the minimal sphere supporting $\S_{kj}$ is
$
\c_{kj} = (I_n -\R_k \cdots \R_1) \x_*  + \R_k \cdots \R_1 \c_j,
$\footnote{When $k=0$, $\R_k \cdots \R_1$ is understood as the identity matrix.}
where
\[
\c_j = \x_0 + \Big(I_n - (-1)^j \sum_{l=1}^q \u_l \u_l^{\TT} \Big) (\x_* - \x_0), \quad j\in\{0,1\}.
\]
Consequently, $\c_{k0}+\c_{k1}=2\x_*$ for all $k=0,1,\ldots,n-1$.
\end{cor}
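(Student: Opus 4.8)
The plan is to reduce the computation of the center of $\S_{kj}$ to the already-established Theorem \ref{main thm} by exploiting the isometry $\R_k\cdots\R_1$. Recall from (\ref{sphere 3}) that
\[
\S_{kj} = \{ \x_* + \R_k \cdots \R_1 \R^{2i+j} (\x_0-\x_*): i = 0,1,2,\ldots\}.
\]
Writing $P := \R_k\cdots\R_1$, which is orthogonal (a product of reflections), we have $\S_{kj} = \x_* + P(\S_j - \x_*)$, where $\S_j - \x_* = \{\R^{2i+j}(\x_0-\x_*):i\ge 0\}$ is exactly the translate-to-origin version of $\S_j$ from (\ref{sphere 0}), (\ref{sphere 1}). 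Since an orthogonal map followed by a translation is an isometry, it carries the minimal sphere supporting $\S_j$ to the minimal sphere supporting $\S_{kj}$, and in particular carries the center $\c_j$ of the former to the center of the latter. Thus $\c_{kj} = \x_* + P(\c_j - \x_*) = (I_n - P)\x_* + P\c_j$, which is the claimed formula once we recall $P = \R_k\cdots\R_1$ (and the footnote convention that $P = I_n$ when $k=0$).

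It remains to identify $\c_j$ in the closed form stated, namely $\c_j = (I_n + (-1)^j\sum_{l=1}^q \u_l\u_l^T)\x_*$. This is essentially a restatement of what the proof of Theorem \ref{main thm} already produced: there we found, in the eigenbasis $U$ of $\R_A$, that $\c_0 = U^T(\tilde\x_* - \sum_{l:\theta_l=\pi} x_l\e_l)$ and $\c_1 = U^T(\tilde\x_* + \sum_{l:\theta_l=\pi} x_l\e_l)$, where $\tilde\x_* = U\x_* = \sum_k x_k\e_k$. The term $\sum_{l:\theta_l=\pi} x_l\e_l$ is precisely the coordinate representation of the orthogonal projection of $\x_*$ onto the $(-1)$-eigenspace of $\R_A$; pulling back by $U^T$, this projection is $\sum_{l=1}^q \u_l\u_l^T\x_*$ (here $\{\u_1,\ldots,\u_q\}$ is an orthonormal basis of that eigenspace, so $\sum_l \u_l\u_l^T$ is the orthogonal projector onto it, independent of the chosen basis). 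Hence $\c_j = \x_* + (-1)^{j+1}\cdot(-1)\sum_l\u_l\u_l^T\x_*$; matching signs with the Theorem \ref{main thm} formulas gives $\c_0 = \x_* - \sum_l\u_l\u_l^T\x_*$ and $\c_1 = \x_* + \sum_l\u_l\u_l^T\x_*$, i.e. $\c_j = (I_n + (-1)^j\sum_l\u_l\u_l^T)\x_*$.

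Finally, the identity $\c_{k0}+\c_{k1} = 2\x_*$ follows immediately: $\c_{k0}+\c_{k1} = (I_n-P)\x_* + P\c_0 + (I_n-P)\x_* + P\c_1 = 2(I_n-P)\x_* + P(\c_0+\c_1) = 2(I_n-P)\x_* + 2P\x_* = 2\x_*$, using Theorem \ref{main thm}.

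I do not anticipate a serious obstacle here; the only point requiring a little care is the bookkeeping of which eigenspace the correction term $\sum_{l:\theta_l=\pi}x_l\e_l$ corresponds to, and the verification that $\sum_l \u_l\u_l^T$ is genuinely the orthogonal projector onto the $(-1)$-eigenspace (so that the formula is basis-independent as stated). One should also note that Theorem \ref{main thm} as proved assumed $n$ odd (to guarantee $-1$ is an eigenvalue, i.e. $q\ge 1$); if $n$ is even it can happen that $q=0$, in which case $\c_0=\c_1=\x_*$ and every $\c_{kj}=\x_*$, so the formula still holds with the empty sum. The reduction via the isometry $P$ is the conceptual heart of the argument and is essentially immediate.
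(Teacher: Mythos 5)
Your proposal is correct and is essentially the paper's own argument: both reduce $\S_{kj}$ to $\S_j$ through the orthogonal map $\R_k\cdots\R_1$. The paper implements this by writing the elements of $\S_{kj}$ as $\x_k+\bigl(I_n-(\R_k\cdots\R_1\,\R_A\,(\R_k\cdots\R_1)^T)^{2i+j}\bigr)(\x_*-\x_k)$ and recomputing the center with the conjugated eigenvectors $\u_{kl}=\R_k\cdots\R_1\u_l$, then simplifying via $\x_k=(I_n-\R_k\cdots\R_1)\x_*$; your affine-isometry observation (that $\v\mapsto\x_*+P(\v-\x_*)$ carries the minimal sphere supporting $\S_j$ to that supporting $\S_{kj}$, hence center to center) reaches the same identity $\c_{kj}=(I_n-P)\x_*+P\c_j$ a bit more directly, which is a pleasant streamlining but not a genuinely different method. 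One bookkeeping remark: the formulas you correctly extract from the proof of Theorem \ref{main thm} are $\c_0=(I_n-\sum_{l}\u_l\u_l^T)\x_*$ and $\c_1=(I_n+\sum_{l}\u_l\u_l^T)\x_*$, i.e.\ $\c_j=(I_n+(-1)^{j+1}\sum_l\u_l\u_l^T)\x_*$ rather than $(-1)^{j}$; the same sign-indexing mismatch already exists between the Corollary's stated formula for $\c_j$ and the paper's own proof, so this is an inherited typo rather than a gap in your argument, and it does not affect the conclusion $\c_{k0}+\c_{k1}=2\x_*$.
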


\begin{proof}
As we can see from the proof of Theorem \ref{main thm}, when $\x_0 = 0$ the centers of $\S_0, \S_1$ are respectively
\[
\c_0 = \Big(I_n  - \sum_{l=1}^q \u_l \u_l^{\TT}\Big) \x_* , \quad
\c_1 = \Big(I_n  + \sum_{l=1}^q \u_l \u_l^{\TT}\Big) \x_* .
\]
To understand the center of $\S_{kj}$, we first respectively compute the centers of the minimal spheres supporting $\S_0,\S_1$ for $\x_0 \neq 0$. When $\x_0\neq 0$, we have
\[
\y_i = \x_* + \R_A^{2i}(\x_0-\x_*) = \x_0 + (I_n -\R_A^{2i}) (\x_* - \x_0),
\]
so we can focus on $\{\y_i - \x_0:i=0,1,2,\ldots\} = \S_0 - \x_0$ in the proof of Theorem \ref{main thm}, which is a translation of $\S_0$. Similar to the proof of Theorem \ref{main thm}, we have
\[
\c_0 = \x_0 + \Big(I_n  - \sum_{l=1}^q \u_l \u_l^{\TT}\Big) (\x_* - \x_0) , \quad
\c_1 = \x_0 + \Big(I_n  + \sum_{l=1}^q \u_l \u_l^{\TT}\Big) (\x_* - \x_0) .
\]

We next compute the center of the minimal sphere supporting $\S_{kj}$.
From the above analysis and note that the vectors in $\S_{kj}$ have the form $\x_k + (I_n - (\R_k\cdots \R_1 \R_n\cdots \R_{k+1})^{2i+j}) (\x_*-\x_k)$ by (\ref{sphere 3}), we have that the center of the minimal sphere supporting $\S_{kj}$ equals
\bes
\c_{kj} =  \x_k + \Big(I_n - (-1)^j \sum_{l=1}^q \u_{kl} \u_{kl}^{\TT} \Big) (\x_* - \x_k),
\ees
where $\u_{kl}$ are the unit eigenvectors of $\R_k\cdots \R_1 \R_n\cdots \R_{k+1}$ corresponding to the eigenvalue $-1$.
Since 
\[
\R_k\cdots \R_1 \R_n\cdots \R_{k+1}
= (\R_k\cdots \R_1) \R_A (\R_k\cdots \R_1)^{\TT},
\]
it follows that $\u_{kl} = \R_k\cdots \R_1 \u_l$. 
Also note that $\x_k = (I_n -\R_k \cdots \R_1) \x_* + \R_k \cdots \R_1 \x_0$, so we can rewrite the center as
\beas
\c_{kj} &=& (I_n -\R_k \cdots \R_1) \x_* + \R_k \cdots \R_1 \x_0  + \R_k \cdots \R_1  \Big(I_n  - (-1)^j \sum_{l=1}^q  \u_{l} \u_{l}^{\TT}  \Big) (\x_* - \x_0) \\
&=& (I_n -\R_k \cdots \R_1) \x_*  + \R_k \cdots \R_1 \c_j.
\eeas
This completes the proof.
\end{proof}

From Corollary \ref{cor1}, we see that if $n$ is odd, then $q \geq 1$ and so $\c_0 \neq \c_1 \neq \x_*$ unless $\x_*$ has no overlap in the eigenspace of $\R_A$ corresponding to the eigenvalue $-1$. However, if $n$ is even, it may happen that $q = 0$. In this case $\c_0 = \c_1 = \x_*$. When this happens, $\S_0, \S_1$ are on the same minimal sphere.

\cs{Theorem \ref{main thm} implies the correctness of Algorithm \ref{Kaczmarz algorithm}.} Next, we consider its efficiency.

\begin{thm}
\label{main thm2}
\cs{Let $A$ be an $n\times n$ invertible matrix, $\x_0, \b$ be two $n\times 1$ vectors, and $\x_* = A^{-1}\b$. Let $\y_j,\z_j$ be given in (\ref{sphere 0}), (\ref{sphere 1}), then}
\be
\label{main thm2:eq1}
\|\frac{1}{2N} \sum_{j=0}^{N-1} (\y_j+\z_j) - \x_*\| \leq \varepsilon \|\x_0-\x_*\|,
\ee
where 
\be
N = \left\lceil \frac{\pi \eta(A)}{2\varepsilon } \right\rceil
\ee
and $\eta(A)$ is given in Definition \ref{defn:eta}.
\end{thm}

\begin{proof}
Denote
\bes
\tilde{\x} = \frac{1}{2N} \sum_{j=0}^{N-1} (\y_j+\z_j).
\ees
From the \cs{construction of $\y_j,\z_j$}, we have
\bes
\tilde{\x} = \frac{1}{2N} \sum_{j=0}^{2N-1} \Big(\x_* + \R_A^j (\x_0-\x_*) \Big)
=\x_* + \frac{1}{2N} \sum_{j=0}^{2N-1} \R_A^j (\x_0-\x_*).
\ees
Assume that the unit eigenvectors of $\R_A$ are $\u_l, l \in \{1,\ldots,n\}$. We decompose $\x_0 - \x_* = \sum_{l=1}^n \beta_l \u_l$, \cs{then by the orthogonality of $\{\u_l\}_{l=1}^n$, it is easy to show that}
\begin{eqnarray*}
\|\tilde{\x} - \x_*\|^2 &=& \frac{1}{4N^2} \left\| \sum_{j=0}^{2N-1} \R_A^j (\x_0 - \x_*) \right\|^2 \\
&=& \frac{1}{4N^2} \left\|  \sum_{l=1}^n  \beta_l \left(\sum_{j=0}^{2N-1} e^{ij\theta_l}\right)  \u_l \right\|^2 \\
&=& \frac{1}{4N^2}  \sum_{l=1}^n  |\beta_l|^2 \left|\sum_{j=0}^{2N-1} e^{ij\theta_l} \right|^2 \\
&=& \frac{1}{4N^2}  \sum_{l=1}^n  |\beta_l|^2
\left|\frac{1-e^{2iN\theta_l}}{1-e^{i\theta_l}}\right|^2 \\
&=& \frac{1}{4N^2}  \sum_{l=1}^n  |\beta_l|^2
\frac{\sin^2(N\theta_l)}{\sin^2(\theta_l/2)}.
\end{eqnarray*}

Notice that if $0 < |\theta_l| \leq \pi$, we have $|\sin(\theta_l/2)| \geq |\theta_l|/\pi$. This means that ${ \sin^2(N\theta_l)}/{ \sin^2(\theta_l/2) } \leq \pi^2/\theta_l^2 $. By Lemma \ref{key lemma 2}, $\theta_l \neq 0$ for all $l$. Hence,
\[
\|\tilde{\x} - \x_*\|^2
\leq \frac{\pi^2\|\x_0 - \x_*\|^2}{4N^2\min_l|\theta_l|^2} .
\]
Set the above estimate as $\varepsilon^2\|\x_0 - \x_*\|^2$, we then have
\bes
N = \left\lceil \frac{\pi}{2\varepsilon \min_l|\theta_l|} \right\rceil = \left\lceil \frac{\pi \eta(A)}{2\varepsilon } \right\rceil.
\ees
This completes the proof.
\end{proof}

\cs{As already mentioned in the introduction, based on the above result, to obtain an approximation of the solution up to relative error $\varepsilon$, we can modify Algorithm \ref{Kaczmarz algorithm} by only using $O(\eta(A)\log(1/\varepsilon))$ iterations. The basic idea is as follows: In Equation (\ref{main thm2:eq1}), we choose $\varepsilon=1/2$, then $N = \lceil \pi \eta(A)\rceil$ and $\|\tilde{\x}_0-\x_*\| \leq 0.5 \|\x_0-\x_*\|,$ where 
$\tilde{\x}_0 = \frac{1}{2N} \sum_{j=0}^{N-1} (\y_j+\z_j)$. We now view $\tilde{\x}_0$ as the new initial vector and repeat the above procedure. After $\log(1/\varepsilon)$ iterations, we obtain an approximation of the solution up to relative error $\varepsilon$. The total number of iterations is $O(\eta(A)\log(1/\varepsilon))$. 

}

In Algorithm \ref{Kaczmarz algorithm}, we only focused on $\x_{jm}$ for $j\in\{0,1,\ldots,M-1\}$. So it costs $O(mn)$ to compute $\x_{(j+1)m}$  from $\x_{jm}$. Previous Kaczmarz algorithms usually use $O(n)$ operations at each step of the iteration. We can solve this problem by proposing the following algorithm, the correctness of which is guaranteed by Corollary \ref{cor1} and Theorem \ref{main thm2}.

\begin{algorithm}[H]
\caption{\cs{The second deterministic iterative reflection algorithm}}
\label{Kaczmarz algorithm2}
\begin{algorithmic}[1]
\Require A consistent linear system $A\x=\b$, an arbitrarily chosen initial vector $\x_0$, and $\varepsilon\in[0,1]$.

\Ensure \cs{Output $\tilde{\x}$ such that $\|A\tilde{\x} - \b\|\leq \varepsilon$.}


\State For $k=0,1,2,\cdots $, compute
\be \label{our procedure}
\x_{k+1} = \x_k + 2 \frac{b_{i_k} -A_{i_k}^{\TT}\x_k}{\|A_{i_k}\|^2}   A_{i_k},
\ee
where $i_k= (k\mod m) + 1$.


\State \cs{Stop the iteration at step $N$ if
$\|A\tilde{\x} - \b\| \leq \varepsilon$, where $\tilde{\x} = \frac{1}{N} \sum_{j=0}^{N-1} \x_{j}$.}

\State \cs{Output $\tilde{\x}$.}
\end{algorithmic}
\end{algorithm}

Compared with Algorithm \ref{Kaczmarz algorithm}, this algorithm is easier to implement because it only uses $O(n)$ operations at each step of the iteration. But it requires more steps to converge. The overall complexity of these two algorithms is the same in theory. 
To run Algorithm \ref{Kaczmarz algorithm2} in practice, we can use a similar idea introduced above to reduce the dependence on $\varepsilon$ to $\log(1/\varepsilon)$. That is we compute some $\tilde{\x}$ and restart the iteration from  $\tilde{\x}$ if it is not a good approximation of the solution. It turns out that this is very efficient in practice, see the numerical results in Section \ref{section:Numerical experiments}.


The convergence rates of the algorithms proposed in this paper highly depend on $\eta(A)$. To obtain a better understanding, at the end of this section, we build its connection to the condition number of $A$. Theoretically, we have the following general upper bound.

\begin{prop}
\label{prop:upper bound of convergence rate}
\cs{Let $A$ be an $m\times n$ matrix, then $\eta(A) = O((\log m) \kappa^2(A))$.}
\end{prop}

\begin{proof}
By Lemma \ref{lem:Brady-Watt formula}, we have $\R_A = I_n - 2A^{\TT} W^{-1} A$, where $W$ is the lower-triangular matrix such that $W+W^{\TT} = 2AA^{\TT}$. Now assume that $e^{i\theta}$ is the eigenvalue \cs{of $\R_A$} such that $\theta$ is minimal and nonzero. Set the corresponding unit eigenvector as $\u$, then we have
$|1-e^{i\theta}| = 2\|A^{\TT} W^{-1} A\u\|$.
We consider the case that $\theta>0$ is close to 0 so that the left-hand side is as small as $\theta$. The right-hand side is lower bounded by $2\sigma_{\min}(A)^2 \sigma_{\min}(W^{-1})$, where $\sigma_{\min}$ refers to the minimal nonzero singular value. Thus $ 1/\theta = O(\|W\|/\sigma_{\min}(A)^2)$. Since $W+W^{\TT} = 2AA^{\TT}$, we know that $W = (2AA^{\TT})\circ T - {\rm diag}(\|A_1^{\TT}\|,\cdots,\|A_m^{\TT}\|)$, where $T$ is the triangular truncation operator defined above Lemma \ref{lem: triangular truncation operator}.
By Lemma \ref{lem: triangular truncation operator}, we have
\beas
\|W\| \leq \frac{\log m}{\pi} \|2AA^{\TT}\| + \max_{1\leq i \leq m} \|A_i^{\TT}\|
\leq \frac{2\log m}{\pi} \|A\|^2 +  \|A\|.
\eeas
Therefore, we have
$1/\theta = O((\log m) \kappa^2(A))$.
\end{proof}

The upper bound in the above proposition can \cs{be reached} in some cases. For example, consider the case $m=n=3$. 
For simplicity, we assume that
\[
A = \begin{pmatrix}
1 & 0 & 0 \\
\cos(x) & \sin(x) & 0 \\
\cos^2(x) & \cos(x) \sin(x) & \sin(x)  \\
\end{pmatrix},
\]
where $\sin(x)\neq 0$. Then direct calculation shows that the singular values of $A$ are
\[
|\sin(x)|, \quad \sqrt{\frac{1}{2} \left( 2 + \cos^2(x) \pm |\cos(x)| \sqrt{8+\cos^2(x)} \right)}.
\]
Let $x$ tend to 0, then the singular values are approximately equal to
$
|x|, 3, |x|/3.
$
Thus the condition number is $\kappa(A) = \Theta(1/|x|)$. We can also compute that the eigenvalues of $\R_A$ are 
\[
-1, \,\,\,
1-2\sin^4(x) \pm 2i \sin^2(x) \sqrt{1-\sin^4(x) }.
\]
When $x$ is close to 0, the eigenvalues are close to $-1, 1\pm 2i x^2$, which means $\min |\theta_l| \approx x^2 \approx 1/\kappa^2(A)$. Note that  when $x\approx 0$, the rows of $A$ are close to each other and $\R_A$ is close to ${\rm diag}\{-1,1,1\}$.  In the high-dimensional case, as suggested by the numerical tests, for matrices with the same structure as $A$ (i.e., the $i$-th row is the polar coordinate of a unit vector, all rows use the same parameter $x$), we always have $\min |\theta_l|  \approx 1/\kappa^2$ when $x\approx 0$. 

{
Below we show that for random matrices, $\eta(A)\approx \kappa(A)$. Let $U$ be a random orthogonal matrix taken from the orthogonal group of dimension $n$ uniformly according to the Haar measure. Denote the eigenvalues of $U$ by $\{e^{i\theta_1},\ldots, e^{i\theta_n}\}$. For any $\theta\geq 0$, denote $\mathcal{N}_{\theta}=\#\{j:0\leq \theta_j \leq \theta\}$. Then according to random matrix theory \cite[Proposition 4.7]{meckes2019random}, we have
\be
\label{eq for eta}
\left|\E[\mathcal{N}_{\theta}] - \frac{n\theta}{\pi} \right| \leq \frac{1+\pi}{2\pi}.
\ee
If we choose $\theta = 1/10n$, then $\E[\mathcal{N}_{\theta}] \leq (1+\pi)/2\pi + 1/10\pi < 0.7$. By Markov's inequality
\[
{\rm Prob}[\mathcal{N}_{\theta} > 1] \leq \E[\mathcal{N}_{\theta}] < 0.7.
\]
Thus  $\mathcal{N}_{\theta} = 0$ with probability at least 0.3. Equivalently, with probability at least 0.3, we have $\min|\theta|\geq 1/10n$.


If we pick a point $\v$ uniformly random on the unit sphere of dimension $n$, then $I_n - 2\v\v^T$ defines a reflection in the hyperplane orthogonal to $\v$. We obtain a random orthogonal matrix by forming a product of independent copies of these reflections. The question is how many random reflections are required to get close to the Haar measure. In \cite[page 187]{matrices1986random}, it was shown that $0.5n\log n + \alpha n$ random reflections are required, where $\alpha$ is a universal constant independent of $n$. So $\R_A$ is a random orthogonal matrix under the Haar measure when $m=\Omega(n\log n)$.
In this case, from the above analysis, $\eta(A) < 10n$ with probability at least 0.3, and $\eta(A) \approx n/\pi \approx 0.32n$ on average from (\ref{eq for eta}). In summary, we have the following result.

\begin{prop}
\label{prop:relation of eta and kappa}
Let $A$ be an $m \times n$ random matrix whose rows are uniformly random vectors. Assume that $m=\Omega(n\log n)$. Then $\eta(A)\approx n/\pi$ on average and $\eta(A) < 10 n$ with probability at least 0.3.
\end{prop}

Assume that $A$ is $n\times n$ and that all the entries independently follow the standard normal distribution. Then it was shown in \cite[Theorem 7.1]{edelman1988eigenvalues} that
$
\E[\log \kappa(A)] = \log n + c + o(1)
$
when $n$ is sufficiently large, where $c\approx 1.537$. So on average, $\kappa(A) \approx e^{1.537}n\approx 4.65n$. It follows from Corollary 7.1 of \cite{edelman1988eigenvalues} that 
$
{\rm Prob}[\kappa(A) < x n] = e^{-2/x-2/x^2}.
$
For example, if we choose $x=10$, then $\kappa(A) < 10 n$ with probability close to 0.8.\footnote{As shown in \cite[Corollary 3.3]{tao2010smooth}, if the entries of a matrix take values iid from a distribution with mean zero, then the condition number is smaller than $2n$ with high probability.}
Therefore, for random Gaussian matrices, on average 
$\eta(A) \approx \kappa(A) \approx n.$
This result is better than Proposition \ref{prop:upper bound of convergence rate}.

}



\subsection{Case 2: $A$ has full row-rank}

We now turn to the general case. Assume that $A$ is $m\times n$. We first focus on the case that $A$ has full row-rank $m$. Then we extend our ideas to general consistent linear systems. 

For consistent linear systems with $m<n$, there are an infinite number of solutions. Although the situation looks complicated now, we will see that some interesting properties will appear. The definitions of $\R_i, \R_A$ are the same as those in (\ref{reflection}), (\ref{prod of reflections}). As for $ \S_0,\S_1$, the change is slight. Compared to (\ref{sphere 0}), (\ref{sphere 1}), they now become
$
\S_0 =
\{\x_0, \x_{2m}, \x_{4m}, \ldots \}, 
\S_1 =
\{\x_{m}, \x_{3m}, \x_{5m}, \ldots \}.
$
Below, $\{\x_0, \x_1, \ldots\}$ still refers to the series of vectors generated by the procedure (\ref{our procedure}).

The following result holds for all consistent linear systems.

\begin{thm}
\label{thm1:singular case}
Assume that $A\x = \b$ is consistent.
Let $\x_0$ be an arbitrarily chosen initial vector, then $\{\x_0,\x_1,\x_2,\ldots\}$ \cs{generated by the process (\ref{our procedure})} lies on a sphere 
centered on the solution that has the minimal distance to $\x_0$.
\end{thm}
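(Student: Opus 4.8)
The plan is to reduce the general consistent case to the invertible case by decomposing $\mathbb{R}^n$ into the solution space $\mathcal{N}$ of the homogeneous system $A\x = \0$ and its orthogonal complement. First I would fix $\x_*$ to be the unique solution of $A\x = \b$ lying in the row space $\mathrm{row}(A) = \mathcal{N}^\perp$; this is precisely the solution of minimal norm, and more generally $\x_* + P_{\mathcal{N}}\x_0$ is the solution closest to $\x_0$, where $P_{\mathcal{N}}$ is the orthogonal projector onto $\mathcal{N}$. The claim to prove is then that every $\x_k$ satisfies $\|\x_k - (\x_* + P_{\mathcal{N}}\x_0)\| = \|\x_0 - (\x_* + P_{\mathcal{N}}\x_0)\|$.

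The key observation is that each reflection $\R_i = I_n - 2 A_i A_i^T/\|A_i\|^2$ fixes $\mathcal{N}$ pointwise, since $A_i^T\v = 0$ for all $\v \in \mathcal{N}$ (the rows $A_i^T$ span $\mathrm{row}(A)$). Consequently $\R_i$ maps $\mathcal{N}^\perp = \mathrm{row}(A)$ into itself as well, being orthogonal. So I would write the iteration (\ref{eq1-0}) componentwise with respect to the splitting $\x_k = \u_k + \w_k$ with $\u_k \in \mathrm{row}(A)$ and $\w_k \in \mathcal{N}$. Because the affine update term $2 b_{i_k} A_{i_k}/\|A_{i_k}\|^2$ lies in $\mathrm{row}(A)$, the $\mathcal{N}$-component evolves by $\w_{k+1} = \R_{i_k}\w_k = \w_k$, i.e. it is frozen: $\w_k = P_{\mathcal{N}}\x_0$ for all $k$. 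Meanwhile the $\mathrm{row}(A)$-component $\u_k$ evolves exactly as an invertible-case Kaczmarz iteration: restricting each $\R_i$ to $\mathrm{row}(A)$ gives a reflection of that $m$-dimensional space, and $A$ restricted to $\mathrm{row}(A)$ is invertible (since $A$ has full row-rank $m = \dim\mathrm{row}(A)$), with $\u_k = \x_* + (\R_{i_{k-1}}\cdots) (\u_0 - \x_*)$ by the analogue of (\ref{eq1-case1}). Hence $\|\u_k - \x_*\| = \|\u_0 - \x_*\|$ for all $k$.

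Combining the two components via the Pythagorean theorem: since $\x_k - (\x_* + P_{\mathcal{N}}\x_0) = (\u_k - \x_*) + (\w_k - P_{\mathcal{N}}\x_0) = (\u_k - \x_*) + \0$, and this vector lies entirely in $\mathrm{row}(A)$, we get
\[
\|\x_k - (\x_* + P_{\mathcal{N}}\x_0)\| = \|\u_k - \x_*\| = \|\u_0 - \x_*\| = \|\x_0 - (\x_* + P_{\mathcal{N}}\x_0)\|,
\]
which is exactly the assertion. I should also verify the identification of $\x_* + P_{\mathcal{N}}\x_0$ as the closest solution: any solution is of the form $\x_* + \v$ with $\v \in \mathcal{N}$, and $\|\x_0 - \x_* - \v\|^2 = \|P_{\mathrm{row}(A)}(\x_0-\x_*) \|^2 + \|P_{\mathcal{N}}\x_0 - \v\|^2$ (using $\x_* \in \mathrm{row}(A)$), minimized uniquely at $\v = P_{\mathcal{N}}\x_0$.

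I expect the only real subtlety — not an obstacle so much as a point requiring care — to be making rigorous that $\R_i$ genuinely leaves both $\mathcal{N}$ and $\mathrm{row}(A)$ invariant when the linear system is merely consistent and $A$ is possibly rectangular, and that the affine term stays in $\mathrm{row}(A)$; once that invariant-subspace decomposition is in place, the result follows from the invertible case essentially for free. (For the fully rank-deficient case flagged later in the paper, one would additionally need $m - r$ even and an argument that added constraints don't change $\mathcal{N}$, but the statement as given covers all consistent systems via this direct-sum reduction.)
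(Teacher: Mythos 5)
Your proof is correct, and it packages the argument differently from the paper. The paper never introduces the decomposition $\mathbb{R}^n=\ker A\oplus\mathrm{row}(A)$: it simply notes that for \emph{every} solution $\x$ one has $\x_{k+1}-\x=\R_{i_k}(\x_k-\x)$, so all distances to solutions are preserved, and that $\langle \x_{k+1}-\x_*,\x-\x_*\rangle=\langle \x_k-\x_*,\R_{i_k}(\x-\x_*)\rangle=\langle \x_k-\x_*,\x-\x_*\rangle$ because $\x-\x_*\in\ker A$ is fixed by each $\R_{i_k}$; hence orthogonality of $\x_0-\x_*$ to the solution set propagates and $\x_*$ stays the nearest solution to every iterate. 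Your version makes the same two facts explicit through the frozen kernel component $\w_k=P_{\mathcal{N}}\x_0$ and the isometric orbit of $\u_k$ about the minimal-norm solution, which buys an explicit identification of the sphere's center as $\x_*+P_{\mathcal{N}}\x_0$ and makes the picture of Figure~\ref{circles in the singular case} (the sphere lying in a translate of $\mathrm{row}(A)$, orthogonal to the solution set) completely transparent; the paper's route is shorter and needs no projectors. One blemish to fix: your parenthetical ``$A$ restricted to $\mathrm{row}(A)$ is invertible since $A$ has full row-rank $m=\dim\mathrm{row}(A)$'' is false when $A$ is rank deficient (then $\dim\mathrm{row}(A)=r<m$), and the theorem covers that case; but nothing in your argument actually uses it — the relation $\u_{k+1}=\x_*+\R_{i_k}(\u_k-\x_*)$ only needs $b_{i_k}=A_{i_k}^T\x_*$ and orthogonality of $\R_{i_k}$, so simply delete that remark (and likewise the ``$m$-dimensional'' qualifier). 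Your closing caveat about $m-r$ being even is also unnecessary here: that condition matters only for the later convergence-of-averages results, not for this sphere statement.
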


\begin{proof}
We use $\x_*$ to denote the solution of $A\x = \b$ that has the minimal distance to $\x_0$.
For any solution $\x$ of the linear system, we have $b_i = \langle A_i|\x\rangle$. Thus
$\x_{k+1} - \x = \R_i (\x_k - \x)$, where $i = (k \mod m) + 1$. This means $\|\x_{k+1} - \x\| = \|\x_k - \x\|$. Moreover, $\langle \x_{k+1} - \x_*| \x - \x_*\rangle = \langle \x_k - \x_*|\R_i| \x - \x_*\rangle = \langle \x_k - \x_*|\x - \x_*\rangle$. This implies that if $\langle \x_0 - \x_*|\x - \x_*\rangle = 0$, then $\langle \x_k - \x_*|\x - \x_*\rangle = 0$ for all $k$. Equivalently, if $\x_*$ is the solution that has the minimal distance to $\x_0$, then $\x_*$ is the solution that has the minimal distance to all $\x_k$.
\end{proof}

Figure \ref{circles in the singular case} in the introduction is an illustration of Theorem \ref{thm1:singular case} in dimension 3. As a result of Theorem \ref{thm1:singular case}, we can approximate any solution of $A\x = \b$ that has the minimal distance to a given vector we \cs{are interested in} using Algorithm \ref{Kaczmarz algorithm}. For example, if we start from $\x_0=0$, we can approximate the solution that has the minimal norm. 

Theorems \ref{main thm}, \ref{main thm2} are not influenced too much. 
We first prove an analogy of Theorem \ref{main thm2}.  

\begin{thm}
\label{thm2:singular case}
Assume that $A$ is $m\times n$ and ${\rm Rank}(A)=m$, then
\be
\|\frac{1}{N} \sum_{i=0}^{N-1} \x_{im} - \x_*\| \leq \varepsilon \|\x_0-\x_*\|,
\ee
where \cs{$N=O(\eta(A)/\varepsilon)$},
and $\x_*$ is the solution that has the minimal distance to $\x_0$.
\end{thm}

\begin{proof}
The proof here is similar to that of Theorem \ref{main thm2}. It suffices to show that $\x_*$ has no overlap in the eigenspace of $\R_A$ corresponding to the eigenvalue 1. With this result, the estimation of $N$ then follows directly by a similar argument to the proof of Theorem  \ref{main thm2}.

By Brady-Watt's formula (see Lemma \ref{lem:Brady-Watt formula}) $\R_A = I_n - 2A^{\TT} W^{-1} A$ and the fact that $A$ has full row-rank, we know that $1$ is an eigenvalue of $\R_A$ with multiplicity $n-m$.
Moreover, assume that $\u$ is an eigenvalue of $\R_A$ corresponding to 1, i.e., $\R_A \u = \u$, then $A\u = 0$. Indeed, from Brady-Watt's formula, we have $A^{\TT} W^{-1} A \u = 0$. Since $A$ has full row-rank, $W^{-1} A \u = 0$ which further implies $A\u=0$ because of the non-singularity of $W$.
Since $\x_*$ is the solution that has the minimal distance to $\x_0$, we have that $\x_0-\x_*$ is a linear combination of eigenvectors of $\R_A$ that are not corresponding to the eigenvalue 1. Indeed, if there is an overlap, say $\v$, on the eigenspace of $\R_A$ corresponding to the eigenvalue 1, then the distance between $\x_0$ and $\x_*+\v$ is strictly smaller than the distance between $\x_0$ and $\x_*$. This is a contradiction in that $\x_*+\v$ is also a solution  of $A \x = \b$.
\end{proof}

\begin{thm}
\label{thm11:singular case}
Assume that ${\rm Rank}(A)=m$.
For $j\in\{0,1\}$,  let $\c_j$ be the center of the minimal sphere supporting $\S_j(\x_0)$, then $\c_0+\c_1 = 2\x_*$, where $\x_*$ is the solution that has the minimal distance to $\x_0$.
\end{thm}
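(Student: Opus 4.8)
The plan is to run the proof of Theorem~\ref{main thm} essentially verbatim, replacing the two places where invertibility of $A$ was used by their counterparts for a full row-rank matrix. The one structural input is the identity already underlying Theorem~\ref{main thm}: writing $\x_*$ for the solution of $A\x=\b$ closest to $\x_0$, the computation in the proof of Theorem~\ref{thm1:singular case} gives $\x_k-\x_*=\R_{i_{k-1}}\cdots\R_{i_0}(\x_0-\x_*)$ for all $k$ (with $i_l=(l\bmod m)+1$), hence $\y_j=\x_*+\R_A^{2j}(\x_0-\x_*)$ and $\z_j=\x_*+\R_A^{2j+1}(\x_0-\x_*)$. Thus $\S_0(\x_0)$ and $\S_1(\x_0)$, translated by $-\x_*$, are exactly the even- and odd-power orbits of $\x_0-\x_*$ under $\R_A$, precisely as in the invertible case; from here on the argument is indifferent to whether $A$ is square.

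The second input is the fact that was vacuous before (it was subsumed by Lemma~\ref{key lemma 2} when $A$ was invertible): I would show that $\x_0-\x_*$ has no component in the eigenspace of $\R_A$ for eigenvalue $1$. By Lemma~\ref{lem:Brady-Watt formula}, $\R_A=I_n-2A^TW^{-1}A$ with $W$ invertible; since $A$ has full row rank this gives $\R_A\u=\u\iff A\u=0$ (and conversely $A\u=0$ forces each $\R_i\u=\u$), so the eigenspace of $\R_A$ for $1$ is precisely $\ker A$, of dimension $n-m$, and the solution set of $A\x=\b$ is $\x_*+\ker A$. Minimality of $\|\x_0-\x_*\|$ then forces $\x_0-\x_*\perp\ker A$, so those eigencomponents vanish — this is exactly the reasoning already carried out inside the proof of Theorem~\ref{thm2:singular case}.

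With both inputs in hand I would diagonalize $\R_A=U^{*}DU$ ($U$ unitary, $D=\diag(e^{i\theta_1},\dots,e^{i\theta_n})$, $|\theta_l|\le\pi$), translate the picture by $-\x_0$, apply $U$, and expand $U(\x_*-\x_0)=\sum_k v_k\e_k$; then $U(\y_j-\x_0)=\sum_k(1-e^{2ij\theta_k})v_k\e_k$ and $U(\z_j-\x_0)=\sum_k(1-e^{i(2j+1)\theta_k})v_k\e_k$, to which Lemma~\ref{key lemma} applies exactly as in Theorem~\ref{main thm}. The $\theta_k=0$ terms drop out (indeed $v_k=0$ there by the previous paragraph); the $\theta_k=\pi$ terms are annihilated in the even orbit but contribute the constant shift $2\sum_{\theta_k=\pi}v_k\e_k$ in the odd orbit; and, after pairing $\pm\phi$, the remaining eigenphases give orbits whose minimal supporting sphere has a center read off directly from Lemma~\ref{key lemma}. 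Undoing the translation yields $\c_0=\x_*+P_{-1}(\x_0-\x_*)$ and $\c_1=\x_*-P_{-1}(\x_0-\x_*)$, where $P_{-1}$ is the orthogonal projector onto the $(-1)$-eigenspace of $\R_A$ (the same centers appear, for general $\x_0$, in Corollary~\ref{cor1} at $k=0$), whence $\c_0+\c_1=2\x_*$.

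I expect the only delicate point to be the bookkeeping around the two degenerate eigenvalues: confirming that the eigenvalue-$1$ subspace is exactly $\ker A$, so that minimality of $\x_*$ removes those components, and that the eigenvalue-$(-1)$ components are precisely what separate $\c_0$ from $\c_1$ while everything else cancels in the sum. One must also keep track of the translation by $\x_0$ (since both $\S_j(\x_0)$ and $\x_*$ now depend on $\x_0$), but this only shifts centers and leaves the Lemma~\ref{key lemma} computation untouched, so no genuinely new idea beyond Theorems~\ref{main thm} and \ref{thm2:singular case} is required.
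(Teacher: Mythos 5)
Your proposal is correct and follows essentially the same route as the paper: it reuses the eigen-decomposition and Lemma \ref{key lemma} bookkeeping from the proof of Theorem \ref{main thm} for the translated orbits $\y_j-\x_0$, $\z_j-\x_0$, and invokes the argument from Theorem \ref{thm2:singular case} (Brady--Watt plus full row rank plus minimality) to kill the eigenvalue-$1$ components, leaving only the $(-1)$-eigenspace contribution, which cancels in $\c_0+\c_1$. No substantive difference from the paper's proof.
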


\begin{proof}
Based on the proof of Theorem \ref{main thm}, the argument here is greatly simplified. Note that here we need to focus on $\y_j - \x_0, \z_j - \x_0$ and $ \x_* - \x_0 $. The notation below is the same as that in the proof of Theorem \ref{main thm}. Denote $\tilde{\x}_* = U(\x_* - \x_0) = \sum_{k=1}^n x_k \e_k$, then from the proof of Theorem \ref{main thm} we know that
\beas
\c_0 &=& \x_0 + U^{\dag} \left(\tilde{\x}_* - \sum_{l:\theta_l = \pi } x_l \e_l - \sum_{l:\theta_l = 0 } x_l \e_l \right) 
= \x_* - U^{\dag} \left(\sum_{l:\theta_l = \pi } x_l \e_l + \sum_{l:\theta_l = 0 } x_l \e_l\right), \\
\c_1 &=& \x_0 + U^{\dag} \left(\tilde{\x}_* + \sum_{l:\theta_l = \pi } x_l \e_l - \sum_{l:\theta_l = 0 } x_l \e_l \right) 
= \x_* + U^{\dag} \left(\sum_{l:\theta_l = \pi } x_l \e_l - \sum_{l:\theta_l = 0 } x_l \e_l\right).
\eeas
From the proof of Theorem \ref{thm2:singular case}, we know that $\x_* - \x_0 $ has no overlap in the eigenspace of $\R_A$ corresponding to the eigenvalue 1. This means that the summations \cs{over $l$ with $\theta_l=0$} on the right hand side of $\c_0,\c_1$ are zero.
Therefore, we have $\c_0 + \c_1 = 2\x_*$.
\end{proof}

\subsection{Case 3: $A$ does not have full row-rank}
\label{Case 3: A does not have full row-rank}

In this section, we consider general consistent linear systems. We will show that Theorems \ref{thm2:singular case}, \ref{thm11:singular case} are almost correct. 
From the proof of Theorem \ref{thm2:singular case}, we know that a key point to the success of \cs{Algorithm \ref{Kaczmarz algorithm}} is that $\x_0-\x_*$ has no overlap in the eigenspace of $\R_A$ corresponding to the eigenvalue $1$. This result is usually no longer correct when $A$ does not have full row rank. Below we give a simple method to fix this. We will show that if we randomly introduce some new linear constraints that do not change the solution set, then the new linear system will satisfy the above-expected condition with high probability. This idea may fail, while we will show that the set of the new linear constraints that makes the algorithm fails is a set with Lebesgue measure zero.

\begin{lem}
\label{lem:determinant}
Let $A$ be a $r\times r$ invertible matrix, $C$ be a $l\times r$ matrix with no zero rows. Let $B$ be the $l\times l$ lower triangular matrix such that $B+B^{\TT} = C(A+A^{\TT})C^{\TT}$. If $B$ is invertible, then
\[
\det(I_r - A^{\TT}C^{\TT}B^{-1}C)
=
\det(I_l - CA^{\TT}C^{\TT}B^{-1}).
\]
\end{lem}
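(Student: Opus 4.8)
The plan is to recognize the identity as an instance of the Weinstein--Aronszajn / Sylvester determinant identity $\det(I + XY) = \det(I + YX)$, applied with a suitable factorization. First I would set $X := A^T C^T B^{-1}$ (an $r \times l$ matrix) and $Y := C$ (an $l \times r$ matrix). Then the left-hand side is $\det(I_r - XY)$ and the right-hand side is $\det(I_l - YX)$, so the identity follows immediately from Sylvester's theorem, \emph{provided} we are allowed to factor $A^T C^T B^{-1} C$ as $X Y$ in exactly this way — which we are, since matrix multiplication is associative. In fact this shows the hypotheses that $A$ and $B$ be invertible are not even needed for the determinant identity itself; they are presumably retained because this lemma will be applied in a context (via Lemma~\ref{lem:Brady-Watt formula}, with $\R_A = I - 2A^TW^{-1}A$ and the analogous formula for the ``reduced'' matrix $C$) where $B^{-1}$ must actually exist.

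If one prefers a self-contained argument rather than quoting Sylvester, the standard route is the block-matrix computation: consider
\[
\begin{pmatrix} I_r & X \\ Y & I_l \end{pmatrix}
\]
and compute its determinant by Schur complements in the two possible orders, giving $\det(I_r) \det(I_l - Y I_r^{-1} X) = \det(I_l)\det(I_r - X I_l^{-1} Y)$, i.e. $\det(I_l - YX) = \det(I_r - XY)$. I would likely just cite this as a well-known fact, since the paper is not trying to reprove linear algebra staples.

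The only genuine subtlety — and the one place I would be careful — is bookkeeping the shapes and the placement of transposes: here $C$ is $l \times r$, $A$ (hence $A^T$) is $r \times r$, and $B^{-1}$ is $l \times l$, so $A^TC^TB^{-1}C$ is indeed $r\times r$ and $CA^TC^TB^{-1}$ is $l \times l$, and the grouping $X = A^TC^TB^{-1}$, $Y = C$ has the sizes $X \in \R^{r\times l}$, $Y \in \R^{l \times r}$ needed for Sylvester. There is no real obstacle beyond this; the ``hard part,'' such as it is, is merely making sure the chosen factorization puts $C$ on the outside on both ends so that cyclically moving it across the determinant produces precisely the two stated expressions. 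I would present the proof in two lines: state Sylvester's identity, then apply it with the factorization above.
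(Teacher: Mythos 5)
Your proof is correct, and it takes a genuinely different --- and considerably shorter --- route than the paper's. You invoke the Sylvester/Weinstein--Aronszajn identity $\det(I_r - XY)=\det(I_l - YX)$ with $X=A^TC^TB^{-1}$ ($r\times l$) and $Y=C$ ($l\times r$); the shapes match and the conclusion is immediate, and you are also right that under this reading the invertibility of $A$ and the no-zero-rows condition on $C$ are not needed for the identity itself, only the existence of $B^{-1}$. The paper argues directly instead: it sets $M=I_r-A^TC^TB^{-1}C$ and $N=I_l-CA^TC^TB^{-1}$, observes the intertwining relation $CM=NC$, takes a singular value decomposition $C=UDV^T$, and performs a block computation (treating the full-rank and rank-deficient cases of $C$ separately), using that $M$ acts as the identity on the kernel of $C$ to force the off-diagonal block to vanish and the lower-right block to be the identity. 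What the longer argument buys is the finer conclusion recorded in the remark following the lemma: $M$ and $N$ have the same eigenvalues except for the multiplicity of the eigenvalue $1$, which differs by $|r-l|$; only the determinant equality is actually used later (in the proof of Proposition \ref{prop: eigenvalue 1 of products of reflections}), and your route recovers the spectral refinement for free anyway if you also quote the standard fact that $XY$ and $YX$ share their nonzero spectrum with multiplicities, so nothing essential is lost.
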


\begin{proof}
For convenience, we denote $M = I_r - A^{\TT}C^{\TT}B^{-1}C$ and $N = I_l - CA^{\TT}C^{\TT}B^{-1}$. Denote the $i$-th row of $C$ as $\c_i^{\TT}$.

We first suppose that ${\rm Rank}(C) = l$.  Let the singular value decomposition of $C$ be  $C = UDV^{\TT}$, where $D$ is a matrix of the form $ (\Lambda_{l\times l}, \,\, 0_{l\times (r-l)} )$ and $\Lambda_{l\times l}$ is an invertible diagonal matrix. It is easy to check that $C M = NC$. So we have $D V^{\TT} M V = U^{\TT} N U D$. We decompose $V^{\TT} M V = \begin{pmatrix}
M_1 & M_2 \\
M_3 & M_4
\end{pmatrix}$, where $M_1$ is $l\times l$, then  
\bes
\Big(\Lambda_{l\times l}, \,\, 0_{l\times (r-l)} \Big)
\begin{pmatrix}
M_1 & M_2 \\
M_3 & M_4
\end{pmatrix}
= U^{\TT}N U \Big(\Lambda_{l\times l}, \,\, 0_{l\times (r-l)} \Big).
\ees
So $\Lambda_{l\times l} M_1 = U^{\TT}N U \Lambda_{l\times l}$ and $M_2 = 0$. 
Note that $M$ is invariant in the orthogonal complementary space generated of the rows of $C$. Namely, for any column vector $\q\in \mathbb{R}^r$ with $\c_1^{\TT} \q =\cdots=\c_l^{\TT} \q = 0$, we have $M\q=\q$. This means $V^{\TT} M V (V^{\TT}\q) =V^{\TT}\q$. So $M_4 = I_{r-l}$. Thus $\det(M)  = \det(M_1)= \det(N)$. 

Now assume that ${\rm Rank}(C)<l$. We still use the above notation but with slight changes in the meaning. The whole analysis is similar. 
Now we decompose
\[
D = 
\begin{pmatrix}
\widetilde{\Lambda}_{t\times t} & 0 \\
0 & 0
\end{pmatrix},  \quad 
V^{\TT} M V = \begin{pmatrix}
M_{1} & M_{2} \\
M_{3} & M_{4}
\end{pmatrix},  \quad
U^{\TT}N U = \begin{pmatrix}
N_{1} & N_{2} \\
N_{3} & N_{4}
\end{pmatrix}, 
\]
where $M_1,N_1$ are $t\times t$ matrices. By $D V^{\TT} M V = U^{\TT} N U D$, we have
$
\widetilde{\Lambda}_{t\times t} M_1 = N_1 \widetilde{\Lambda}_{t\times t}, M_2 = 0, N_3 = 0.
$
Note that for any left singular vector $\u$ corresponding to the singular value 0 of $C$, we have $\u^{\TT}N = \u^{\TT}$. For any right singular vector $\v$ corresponding to the singular value 0 of $C$, we have $M\v=\v$. Thus $M_4  = I_{r-t},  N_4 = I_{l-t}$. Consequently, $\det(M) = \det(M_1) = \det(N_1) = \det(N)$.
\end{proof}


In the above lemma, note that $NB = B - CA^{\TT}C^{\TT}$ is skew-symmetric, so $\det(I_r - A^{\TT}C^{\TT}B^{-1}C)
=
\det(I_l - CA^{\TT}C^{\TT}B^{-1}) = 0$ if $l$ is odd. The above proof shows that without counting the multiplicity, $M, N$ have the same \cs{set of} eigenvalues. Moreover, when counting the multiplicity, the only difference is the multiplicity of \cs{the} eigenvalue 1. The difference is $|r-l|$.

For any $r$ vectors $\{\v_1,\ldots,\v_r\} \subset \mathbb{R}^n$, we use ${\rm Span}_{\mathbb{R}}\{\v_1,\ldots,\v_r\}$ to denote the $\mathbb{R}$-linear space spanned by $\{\v_1,\ldots,\v_r\}$.

\begin{prop}
\label{prop: eigenvalue 1 of products of reflections}
Let $\{\v_1,\ldots,\v_r\}$ be a set of linearly independent column vectors of $\mathbb{R}^n$, $\u_1,\ldots,\u_l\in {\rm Span}_{\mathbb{R}}\{\v_1,\ldots,\v_r\}$. Denote $V$ as the $r\times n$ matrix with $i$-th row equals $\v_i^{\TT}$. Also, denote
\[
\R_v = \prod_{i=1}^r (I_n - 2 \frac{\v_i \v_i^{\TT}}{\|\v_i\|^2}), \quad 
\R_u =  \R_v \prod_{i=1}^l (I_n - 2 \frac{\u_i \u_i^{\TT}}{\|\u_i\|^2}).
\]

\begin{enumerate}
\item[(a)] If $\R_v\w = \w$, then  $V\w = 0, \R_u \w = \w$. Conversely, if $V\w = 0$, then $\R_v\w = \w$.
\item[(b)] Let $S_{v}$ be the multiplicity of \cs{the} eigenvalue 1 of $\R_v$, and $S_{u}$ be the multiplicity of \cs{the} eigenvalue 1 of $\R_u$.
Let $P$ be the $l\times l$ skew-symmetric matrix defined by
\be \label{skew-symmetric matrix}
P(i,j) = \c_i^{\TT} W \c_j \text{~for~}i>j,
\ee
where $\u_i = V^{\TT} \c_i$, and $W$ is the lower-triangular matrix such that $W+W^{\TT} = 2VV^{\TT}$. If $l$ is odd, then $S_{u} \geq S_{v} + 1$. If $l$ is even and $\det(P) \neq 0$, then $S_{u} = S_{v}$.
\end{enumerate}

\end{prop}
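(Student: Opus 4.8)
The plan is to analyze the eigenspace of $\R_u$ corresponding to eigenvalue $1$ by relating it, via the Brady–Watt formula, to a determinant computation over the $r$-dimensional "coordinate" space spanned by $\{\v_1,\dots,\v_r\}$, and then to apply Lemma \ref{lem:determinant}. Part (a) is essentially a warm-up: if $\R_v\w=\w$, then writing $\R_v = \prod_{i=1}^r\R_{v_i}$ as a product of reflections, a fixed vector of the product must be fixed by considering the action step-by-step — more cleanly, by Brady–Watt (Lemma \ref{lem:Brady-Watt formula}), $\R_v = I_n - 2V^TW^{-1}V$ where $W+W^T = 2VV^T$, so $\R_v\w=\w$ forces $V^TW^{-1}V\w = 0$; taking the inner product with $\w$ and using that $W^{-1}$ has symmetric part equal to $W^{-1}(VV^T)W^{-T}$ (positive semidefinite on the range, invertible since $V$ has full row-rank) gives $V\w=0$. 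The converse is immediate since each reflection $\R_{v_i}$ fixes anything orthogonal to $\v_i$, hence anything in $\ker V$; and $\R_u\w=\w$ follows because each $\R_{u_i}$ fixes $\w$ too, as $\u_i\in{\rm Span}\{\v_j\}$ and $\w\perp$ all $\v_j$.

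For part (b), the idea is to restrict attention to $\mathcal{W} := {\rm Span}_{\R}\{\v_1,\dots,\v_r\}$, on whose orthogonal complement both $\R_v$ and $\R_u$ act as the identity by part (a). So the multiplicity of eigenvalue $1$ satisfies $S_v = (n-r) + s_v$ and $S_u = (n-r) + s_u$, where $s_v, s_u$ are the multiplicities of $1$ as an eigenvalue of the restrictions to $\mathcal{W}$. Now introduce coordinates: write $\u_i = V^T\c_i$ with $\c_i\in\R^l$ — wait, $\u_i\in\mathcal{W}$ means $\u_i = V^T\c_i$ for some $\c_i\in\R^r$; identifying $\mathcal{W}\cong\R^r$ via $V^T$ (this is a linear isomorphism onto $\mathcal{W}$ since $V$ has full row-rank $r$), the reflection $I_n - 2\u_i\u_i^T/\|\u_i\|^2$ restricted to $\mathcal{W}$ corresponds, up to the change of basis given by $VV^T$, to an operator on $\R^r$ built from $\c_i$. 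Concretely, apply Brady–Watt once more: on $\mathcal{W}$, $\R_u^{-1}\R_v = \prod_i(I - 2\u_i\u_i^T/\|\u_i\|^2)$ and its fixed space relates to $\det(I_r - (\text{something})^TC^TB^{-1}C)$ where $C$ is the matrix with rows $\c_i^T$ and $B$ is the lower-triangular matrix with $B+B^T = C(A+A^T)C^T$ for the appropriate $A$ — here $A$ should be $W_v$, the lower-triangular matrix with $W_v + W_v^T = 2VV^T$, so that $A^TC^TB^{-1}C$ becomes the "$\R_v$-side" operator acting on coordinates. The matrix $B+B^T = C(2VV^T)C^T = 2(\text{Gram of the }\u_i)$, and $B(i,j) = \c_i^T(2VV^T)\c_j$ for $i>j$; comparing with $W_v+W_v^T = 2VV^T$ one sees $B(i,j) - (W_v\text{-weighted inner product})$ lands exactly on $P(i,j) = \c_i^T W_v \c_j$ as defined in \eqref{skew-symmetric matrix}. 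Then Lemma \ref{lem:determinant} says the eigenvalues (ignoring multiplicity) of the $r\times r$ operator and the $l\times l$ operator $I_l - CW_v^TC^TB^{-1}$ agree, and the remark following Lemma \ref{lem:determinant} says the multiplicity of eigenvalue $1$ differs by exactly $|r-l|$, while $\det(I_l - C W_v^T C^T B^{-1}) = 0$ automatically when $l$ is odd (since $NB = B - CW_v^TC^T$ is skew-symmetric), and equals $\det(P)$-related nonzero quantity when $l$ is even with $\det(P)\neq 0$. Chasing through: when $l$ is odd, the $l\times l$ side has $1$ as an eigenvalue, forcing $s_u > s_v$, i.e. $S_u \geq S_v+1$; when $l$ is even and $\det(P)\neq 0$, the $l\times l$ side has $1$ as an eigenvalue with multiplicity $0$, and counting gives $s_u = s_v$, hence $S_u = S_v$.

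The main obstacle I anticipate is bookkeeping the change of basis correctly: the reflections $I_n - 2\u_i\u_i^T/\|\u_i\|^2$ live in $\R^n$ with the standard inner product, but once restricted to $\mathcal{W}$ and pulled back to $\R^r$ via $\u_i = V^T\c_i$, the relevant "inner product" on $\R^r$ becomes the one given by $VV^T$, not the standard one — so one must be careful that the quantities $\|\u_i\|^2 = \c_i^T VV^T \c_i$ and the products $\u_i^T\u_j = \c_i^T VV^T\c_j$ are exactly what appears in the hypotheses of Lemma \ref{lem:determinant} (where $A = W_v$ and $B+B^T = C(A+A^T)C^T = C(2VV^T)C^T$). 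Making the matching $B(i,j)$ vs. $P(i,j)$ precise — i.e. verifying that the skew-symmetric matrix $P$ defined in \eqref{skew-symmetric matrix} is precisely the obstruction $NB = B - CA^TC^T$ from the remark after Lemma \ref{lem:determinant} — is the one genuinely fiddly computation, but it is purely formal. Everything else is assembling part (a), the dimension count $n-r$, Brady–Watt, and Lemma \ref{lem:determinant} together.
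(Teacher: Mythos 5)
Your part (a) is fine: arguing via the symmetric part of $W^{-1}$, which equals $W^{-1}(VV^T)W^{-T}$ and is positive definite because $V$ has full row rank, is a valid (slightly more roundabout) variant of the paper's argument, which simply uses injectivity of $V^T$ and invertibility of $W$ to pass from $V^TW^{-1}V\w=0$ to $V\w=0$. For part (b) you have also collected the right ingredients — coordinates $\u_i=V^T\c_i$, the matrix $C$ with rows $\c_i^T$, the invertible lower-triangular $W_1$ with $W_1+W_1^T=2CVV^TC^T$, Lemma \ref{lem:determinant} with $A=W$, and the identification of the skew-symmetric obstruction $W_1-CW^TC^T$ with $P$ — and this is exactly the paper's route.

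The gap is that the step you describe as ``its fixed space relates to $\det(\cdots)$'' is the heart of the proof and is never carried out. What is needed is the reduction: $\R_u\x=\x$ iff $\R_v^T\x=\prod_i(I_n-2\u_i\u_i^T/\|\u_i\|^2)\x$, and applying Lemma \ref{lem:Brady-Watt formula} to both sides and cancelling $V^TW^{-T}$ (full row rank of $V$, invertibility of $W$) this is equivalent to $M\,V\x=0$ with $M:=I_r-W^TC^TW_1^{-T}C$. Only then does invertibility of $M$ (equivalently $\det(P)\neq 0$, via Lemma \ref{lem:determinant}) control the eigenvalue-$1$ multiplicity of $\R_u$. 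Without it, your final ``chasing through'' is an assertion, and in the odd case it is also misstated: skew-symmetry gives $\det(I_l-CW^TC^TW_1^{-1})=0$, i.e.\ the $l\times l$ matrix has eigenvalue $0$ (equivalently $M$ is singular), not eigenvalue $1$; and to turn ``$M$ singular'' into ``$S_u\geq S_v+1$'' you need the reverse implication $MV\x=0\Rightarrow\R_u\x=\x$ together with surjectivity of $V$ (giving $S_u=S_v+\dim\ker M$), none of which appears in your write-up. (If you do establish the equivalence, your scheme works and even treats odd $l$ uniformly; the paper instead settles odd $l$ by the simpler parity argument $\det(\R_u)=-\det(\R_v)$ with complex eigenvalues in conjugate pairs, and uses the determinant computation only to handle the even case.) Finally, the remark after Lemma \ref{lem:determinant} about eigenvalue-$1$ multiplicities differing by $|r-l|$ is not the fact you need here — the determinant identity itself is what transfers singularity between the $r\times r$ and $l\times l$ matrices.
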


\begin{proof}
(a). 
By Brady-Watt's formula (see Lemma \ref{lem:Brady-Watt formula}), we have $\R_v = I_n - 2V^{\TT} W^{-1} V$ for some invertible lower triangular matrix $W$ such that $W+W^{\TT} = 2VV^{\TT}$. If $\R_v \w = \w$, we have $V^{\TT} W^{-1} V \w = 0$. Since $V$ has full row-rank, $V\w=0$. The converse statement is obvious. Since $\u_i$ is a linear combination of $\{\v_1,\ldots,\v_r\}$, we know that $\u_i^{\TT} \w = 0$. Thus, $\R_u \w = \w$.

(b). From the proof of (a), we know that $S_v \leq S_u$. Moreover, when $l$ is odd we have $\det(\R_u) = - \det(\R_v)$. Since complex eigenvalues of $\R_u, \R_v$ appear in conjugate forms, we know that $S_v<S_u$ if $l$ is odd. Next, we describe the condition \cs{that ensures} $S_u = S_v$ when $l$ is even.

For convenience, denote
\[
C = \begin{pmatrix}
\c_1^{\TT} \\
\vdots \\
\c_l^{\TT} \\
\end{pmatrix}_{l\times r}, \quad
U = \begin{pmatrix}
\u_1^{\TT} \\
\vdots \\
\u_l^{\TT} \\
\end{pmatrix}_{l\times n} = CV,
\]
Let $W_1$ be the lower-triangular matrix so that $W_1+W_1^{\TT} = 2 UU^{\TT} = 2 CVV^{\TT} C^{\TT} = C (W+W^{\TT}) C^{\TT}$. This means
\[
W_1(i,j) = \begin{cases}
\c_i^{\TT} W \c_i & i=j, \\
\c_i^{\TT} (W+W^{\TT}) \c_j & i>j. \\
\end{cases}
\]

Now assume that $\R_u \x = \x$, i.e., $\R_v^{\TT} \x = \prod_{i=1}^l (I_n - 2 \frac{\u_i \u_i^{\TT}}{\|\u_i\|^2}) \x$. Then by Brady-Watt's formula, we have
\[
(I_n - 2 V^{\TT} W^{-T} V) \x 
= (I_n - 2 U^{\TT} W_1^{-T} U) \x =
(I_n - 2 V^{\TT} C^{\TT} W_1^{-T} C V) \x .
\]
That is $V^{\TT} W^{-T} (I_r - W^{\TT} C^{\TT} W_1^{-T} C) V \x = 0$. Denote $M=I_r - W^{\TT} C^{\TT} W_1^{-T} C$, which is a $r\times r$ matrix. Since $V$ has full-column rank, $V\x=0$ if $M$ is invertible. From (a), $V\x=0$ if and only if $\R_v \x = \x$. So we need to figure out the condition that can make sure that $M$ is invertible.
By Lemma \ref{lem:determinant}, $\det(M) = \det(W_1 - C W^{\TT} C^{\TT})\det(W_1^{-T})$. From the construction, we know that $W_1 - C W^{\TT} C^{\TT}$ is a skew-symmetric matrix with  $(i,j)$-th entry equals $\c_i^{\TT} W \c_j$ for $i>j$. It is the matrix $P$ defined in (\ref{skew-symmetric matrix}). When $l$ is odd, $\det(P) = 0$, which further proves that $S_u > S_v$. When $l$ is even, $S_u = S_v$ if $\det(P) \neq 0$.
\end{proof}

Let $P$ be the skew-symmetric matrix defined in Proposition \ref{prop: eigenvalue 1 of products of reflections}, $\det(P) = 0$ defines a hypersurface with Lebesgue measure 0. So if we randomly choose $\c_1, \ldots,\c_l$, we usually have $\det(P) \neq 0$. In practice, we can just choose $l=2$. The following corollary follows directly from Proposition \ref{prop: eigenvalue 1 of products of reflections}. It states that when the vectors $\{\v_1,\ldots,\v_r\}$ are linearly dependent, we can still find extra vectors such that the eigenspace corresponding to the eigenvalue 1 does not change.

\begin{cor}
\label{cor:a useful result}
Let $\{\v_1,\ldots,\v_p\}$ be linearly independent column vectors in $\mathbb{R}^n$, $\v_{p+1},\ldots,\v_{r}$ be any $(r-p)$ column vectors from ${\rm Span}_{\mathbb{R}}\{\v_1,\ldots,\v_p\}$. Then there exist an integer $l$ and $\u_1,\ldots,\u_l\in {\rm Span}_{\mathbb{R}}\{\v_1,\ldots,\v_r\}$ such that 
\[
\R_v = \prod_{i=1}^{p} (I_n - 2 \frac{\v_i \v_i^{\TT}}{\|\v_i\|^2}), \quad 
\R_u = \R_v \prod_{i=p+1}^{r} (I_n - 2 \frac{\v_i \v_i^{\TT}}{\|\v_i\|^2}) \prod_{j=1}^l (I_n - 2 \frac{\u_j \u_j^{\TT}}{\|\u_j\|^2})
\]
have the same eigenspace corresponding to the eigenvalue 1.
\end{cor}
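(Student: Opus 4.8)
The plan is to apply Proposition \ref{prop: eigenvalue 1 of products of reflections} with the linearly independent ``base'' taken to be $\{\v_1,\ldots,\v_p\}$ and with the auxiliary family taken to be the concatenation $\{\v_{p+1},\ldots,\v_r,\u_1,\ldots,\u_l\}$, every member of which lies in the subspace $S:={\rm Span}_{\mathbb{R}}\{\v_1,\ldots,\v_p\}$ by hypothesis. Writing $V$ for the matrix with rows $\v_1^T,\ldots,\v_p^T$, part (a) of that proposition already yields $\ker V=\{\w:\R_v\w=\w\}\subseteq\{\w:\R_u\w=\w\}$, so the eigenvalue-$1$ eigenspace of $\R_v$ is automatically contained in that of $\R_u$; it therefore remains only to choose the integer $l$ and the vectors $\u_1,\ldots,\u_l\in S$ so that the two eigenspaces have the same dimension, after which equality is forced. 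By part (b) this reduces to two requirements: $r-p+l$ must be even (otherwise the associated skew-symmetric matrix $P$ is of odd order, hence singular), and, subject to that, $\det(P)\neq 0$.

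First I would fix the parity, taking $l$ to be $p$ or $p+1$ --- whichever has the same parity as $r-p$ --- so that $r-p+l$ is even and $l\geq p$. The remaining point is to produce, for this $l$, vectors $\u_1,\ldots,\u_l\in S$ with $\det(P)\neq0$; by the determinant identity used in the proof of part (b) this is the same as making the relevant $p\times p$ matrix invertible, equivalently (via part (a)) making the eigenvalue-$1$ multiplicity of $\R_u$ equal to $n-p$, which is the multiplicity for $\R_v$.

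Here is the mechanism I would use. Every reflection occurring in $\R_u$ is about a vector lying in $S$, so $S$ and $S^\perp$ are both invariant and $\R_u$ restricts to the identity on $S^\perp$; hence $\R_u$ misses the eigenvalue $1$ precisely when its restriction $\R_u|_S$ to $S\cong\mathbb{R}^p$ does. Now
\[
\R_u|_S \;=\; G\cdot\bigl(\R_{\u_1}|_S\cdots\R_{\u_l}|_S\bigr),\qquad G:=\R_v|_S\,\R_{\v_{p+1}}|_S\cdots\R_{\v_r}|_S,
\]
where $G$ is a fixed orthogonal transformation of $S$ expressed as a product of $r$ hyperplane reflections of $S$, so $\det G=(-1)^r$. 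By the Cartan--Dieudonn\'e theorem every orthogonal transformation of $S$ is a product of at most $p$ hyperplane reflections of $S$, and by inserting cancelling pairs one can lengthen any such factorisation to one with exactly $l$ factors whenever $l\geq p$ and the parity of $l$ matches the determinant; since $\det(-G^{-1})=(-1)^{p}(-1)^{-r}=(-1)^{r-p}\equiv(-1)^l$, we may choose $\u_1,\ldots,\u_l\in S$ with $\R_{\u_1}|_S\cdots\R_{\u_l}|_S=-G^{-1}$, and then $\R_u|_S=-I_S$. This operator has $-1$ as its only eigenvalue, so the eigenvalue-$1$ multiplicity of $\R_u$ is exactly $\dim S^\perp=n-p$; together with the inclusion from part (a) this shows the eigenvalue-$1$ eigenspaces of $\R_v$ and $\R_u$ coincide.

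The step I expect to require the most care is this last one: verifying that a product of exactly $l$ reflections with all mirrors inside $S$ can be made equal to the prescribed map $-G^{-1}$ of $S$, i.e. that the Cartan--Dieudonn\'e factorisation can be padded up to the even total $r-p+l$ without leaving $S$ --- using trivial pairs $\R_{\u}\R_{\u}=I_n$ makes this transparent, but one must keep every $\u_j$ nonzero so each reflection is well defined, and dispose of the harmless edge cases ($p=0$, or $r=p$ with the given data already satisfying the conclusion). An alternative, less constructive route is to note that $\det(P)$ is a polynomial in the coordinates of $\u_1,\ldots,\u_l$ which the above choice shows is not identically zero; it then vanishes only on a set of Lebesgue measure $0$, so almost every choice of the $\u_j$ works --- in line with the ``measure zero'' philosophy used throughout this subsection.
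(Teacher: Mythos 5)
Your proposal is correct, but it proves the existence claim by a genuinely different, constructive route. The paper simply fixes the parity (choose $l$ with $l+r-p$ even) and then invokes the fact that $\det(P)=0$, with $P$ the skew-symmetric matrix of (\ref{skew-symmetric matrix}), cuts out a Lebesgue-null set, so a generic choice of $\u_1,\ldots,\u_l$ works; it does not exhibit any particular admissible choice, and it leaves implicit that $\det(P)$ is not identically zero as a polynomial in the coordinates of the $\u_j$. You instead bypass $P$ altogether: after using part (a) of Proposition \ref{prop: eigenvalue 1 of products of reflections} (equivalently Brady--Watt) to identify the fixed space of $\R_v$ with the orthogonal complement of $S={\rm Span}_{\mathbb{R}}\{\v_1,\ldots,\v_p\}$, you exploit that all mirrors lie in $S$, so $\R_u$ is the identity on $S^{\perp}$ and the question reduces to making $\R_u|_S$ free of the eigenvalue $1$; Cartan--Dieudonn\'e (with padding by cancelling pairs, which is where your parity condition $l\equiv r-p \pmod 2$ and $l\geq p$ enter) lets you force $\R_u|_S=-I_S$. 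This is more elementary in that it never uses part (b) of the proposition, it gives explicit $l$ and explicit vectors, and, as you note, it also supplies the nonvanishing witness that the paper's measure-zero argument tacitly needs --- with the small caveat that deducing $\det(P)\neq 0$ from $S_u=S_v$ uses the ``if and only if'' asserted in the proof of part (b) (equivalently, that the fixed space of $\R_u$ is exactly the kernel of $MV$), not just the implication stated there. What the paper's shorter argument buys in return is precisely the genericity statement (``almost any added constraints work''), which is what gets used later when random rows $CA$ are appended; your construction complements rather than replaces that remark.
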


\begin{proof}
From Proposition \ref{prop: eigenvalue 1 of products of reflections}, we need to choose $l$ so that $l+r-p$ is even. We also need to make sure that the skew-symmetric matrix defined by (\ref{skew-symmetric matrix}) is nonsingular. Since $\det(P) = 0$ defines a set with zero Lebesgue measure, such $\u_1,\ldots,\u_l$ always exist.
\end{proof}

Making the same assumption as the above corollary, the eigenspace corresponding to the eigenvalue 1 of the operator $\R_v \prod_{i=p+1}^{r} (I - 2 \frac{\v_i \v_i^{\TT}}{\|\v_i\|^2})$ may not be equal to that of $\R_v$. However, we can introduce some extra vectors to modify it. To this end, we introduce the following concept.

\begin{defn}[Reflection consistent]
\label{defn:Reflection consistent}
Let $A$ be an $m\times n$ matrix of rank $r$. Let $\a_1^{\TT},\ldots,\a_r^{\TT}$ be $r$ linearly independent rows of $A$. If $\R_A$ and $ \prod_{i=1}^{r} (I_n - 2 \frac{\a_i \a_i^{\TT}}{\|\a_i\|^2})$ have the same eigenspace corresponding to \cs{the} eigenvalue 1, then $A$ is called reflection consistent.
\end{defn}

From (a) of Proposition \ref{prop: eigenvalue 1 of products of reflections}, the above concept is independent of the choice of $\a_1^{\TT},\ldots,\a_r^{\TT}$. 
By Proposition \ref{prop: eigenvalue 1 of products of reflections}, \cs{we have the following necessary and sufficient condition for reflection consistency.}

\cs{
\begin{cor}
\label{cor:reflection consistence}
Let $A$ be an $m\times n$ matrix of rank $r$. Let $B$ be any matrix consisting of $r$ linearly independent rows of $A$. For any row $\a_j^{\TT}$ of $A$ not in $B$, assume that $\a_j^{\TT} = \c_j^{\TT} B$. Let $P$ be the $(m-r)\times(m-r)$ skew-symmetric matrix with $(i,j)$-th entry defined by
\[
P(i,j) = \c_i^{\TT} W \c_j~\text{for}~i>j,
\]
where $W$ is the lower-triangular matrix such that $W+W^{\TT} = 2BB^{\TT}$. Then $A$ is reflection consistent if and only if $\det(P)\neq 0$ and $m-r$ is even.
\end{cor}

Certainly, $\det(P)\neq 0$ implies that $m-r$ is even. The latter condition is usually easy to check especially when $A$ has full column rank, so we keep it in the statement of the above corollary.
The condition $\det(P)\neq 0$ in the above corollary usually holds when $A$ is chosen randomly, so we almost can say that $A$ is reflection consistent if and only if $m-r$ is even. We below show that Algorithm \ref{Kaczmarz algorithm} works if $A$ is reflection consistent.
}


\begin{thm}
\label{thm:inconsistent case}
Assume that the linear system $A\x=\b$ is consistent and $A$ is reflection consistent. Then the average of $\{\x_k:k=0,1,2,\ldots\}$ converges to $\x_*$, where $\x_*$ is the solution that has the minimal distance to $\x_0$.
Moreover, 
\be
\label{neweq1}
\|\frac{1}{N} \sum_{i=0}^{N-1} \x_{im} - \x_*\| \leq \varepsilon \|\x_0-\x_*\|,
\ee
where \cs{$N=O(\eta(A)/\varepsilon)$}.
\end{thm}

\begin{proof}
When $A$ is reflection consistent, $\x_0 - \x_*$ does not have an overlap in the subspace generated by the eigenvectors corresponding to the eigenvalue 1. Consequently, the claimed results follow directly from a similar argument to the proof of Theorem \ref{thm2:singular case}.
\end{proof}

As a result, to improve the efficiency of \cs{Algorithm \ref{Kaczmarz algorithm}}, we can reduce the number of linear equations so that $m=r$, where $r$ is the rank. However, we usually do not know $r$ in advance, especially when $m, n$ are large. From Proposition \ref{prop: eigenvalue 1 of products of reflections} and Corollary \ref{cor:a useful result}, a simple way to apply \cs{Algorithm \ref{Kaczmarz algorithm}} is to randomly introduce some new linear constraints so that $m-r$ is even. Even though we do not know $r$ beforehand, we can try introducing an even or an odd number of new linear constraints. With high probability, \cs{Algorithm \ref{Kaczmarz algorithm}} will work in one and exactly one case.
Hence, even if $r$ is not known to us, we can still use \cs{Algorithm \ref{Kaczmarz algorithm}} to find a high-quality solution with high probability. 

As a simple application of the above idea, we can use the number of linear equations to determine the parity of the rank. For instance, let us consider the following two linear systems: 
\be
\label{try}
A\x = \b
\quad {\rm and} \quad
\begin{pmatrix}
A \\
\tilde{\a}^{\TT} \\
\end{pmatrix} \x
=\begin{pmatrix}
\b \\
\tilde{b} \\
\end{pmatrix},
\ee
where $\tilde{\a}=\sum_{i=1}^m \lambda_i A_i$ and $\tilde{b} = \sum_{i=1}^m \lambda_i b_i$ for some arbitrarily chosen $\lambda_1,\ldots,\lambda_m$ so that $\tilde{\a}$ is not the zero vector. When $A$ does not have full row rank, \cs{Algorithm \ref{Kaczmarz algorithm}} only works for one of them.
Consequently, we can use the quality of the solution to check the parity of the rank of $A$. Namely, if we obtain a high-quality solution from the original linear system, then the rank of $A$ has the same parity as the number of linear equations. Otherwise, it has the opposite parity. 

\section{Inconsistent linear systems}
\label{section:Inconsistent linear systems}

In this section, we focus on inconsistent linear systems $A\x=\b$, i.e., the least-squares problem $\arg\min_{\x} \|A \x-\b\|$.
We will show that Algorithm \ref{Kaczmarz algorithm} usually fails to solve the least-squares problem. To this end, we first provide a more clear description of Algorithm \ref{Kaczmarz algorithm}.

Assume that $A$ is $m\times n$ with no zero rows. Let $\x_0$ be an arbitrarily chosen initial vector used in \cs{Algorithm \ref{Kaczmarz algorithm}}, and let $\{\x_0,\x_1,\x_2,\ldots\}$ be the series of vectors generated by \cs{the procedure (\ref{our procedure}).} We introduce an $n\times m$ matrix $F$ depending on $A$ as follows
\be
\label{def:matrix F}
F := \sum_{j=1}^m \frac{2}{\|A_j\|^2} \R_m \cdots \R_{j+1} A_j \e_j^{\TT},
\ee
where $\R_m \cdots \R_{j+1}$ is viewed as the identity matrix when $j=m$. In addition, define 
\be
\label{def:vector u}
\u = F\b.
\ee

\begin{prop}
\label{prop:new identity}
$FA = I_n -\R_A$.
\end{prop}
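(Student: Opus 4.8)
The plan is to prove the identity $FA = I_n - \R_A$ by expanding $\R_A = \R_m \cdots \R_1$ as a telescoping product and tracking how each reflection contributes the corresponding summand of $F$. Recall $\R_j = I_n - \frac{2}{\|A_j\|^2} A_j A_j^T$, so for each $j$ we have $\R_j A = A - \frac{2}{\|A_j\|^2} A_j (A_j^T A)$; the key observation is that $A_j^T A$ is exactly the $j$-th row of $A$ acting on $A$, but more usefully, writing $\e_j^T$ for the $j$-th standard basis row vector, we can hope to relate $A_j^T$ to $\e_j^T$ applied to $A$ only after we have isolated the structure. Instead I would work directly with the product form.

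First I would set $P_j := \R_m \cdots \R_{j+1}$ (with $P_m = I_n$) and $Q_j := \R_j \cdots \R_1$ (with $Q_0 = I_n$), so that $\R_A = P_0 = P_j Q_j$ for every $j$. The natural move is to compute the telescoping difference $I_n - \R_A = Q_0 - P_0 = \sum_{j=1}^m (P_j Q_{j} - P_{j-1} Q_{j-1})$ — wait, this needs care since $Q_j$ and $P_j$ both change. The cleaner route: write $I_n - \R_A = \sum_{j=1}^{m} P_j (I_n - \R_j) Q_{j-1}$, which telescopes because $P_j (I_n - \R_j) Q_{j-1} = P_j Q_{j-1} - P_j \R_j Q_{j-1} = P_j Q_{j-1} - P_{j-1} Q_{j-1}$ and $P_j Q_{j-1} = \R_m \cdots \R_{j+1} \R_{j-1} \cdots \R_1$ while $P_{j-1} Q_{j-1} = \R_m \cdots \R_1$; summing over $j$ from $1$ to $m$ collapses the alternating terms... actually I should double check the telescoping indexing, but the mechanism is that $P_j Q_{j-1}$ with the $j$-th reflection omitted is not quite what telescopes. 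Let me instead use $I_n - \R_A = \sum_{j=1}^m \big( \R_m\cdots\R_{j+1} - \R_m \cdots \R_j \big) = \sum_{j=1}^m \R_m \cdots \R_{j+1}(I_n - \R_j)$, which genuinely telescopes: the $j$-th term is $P_j - P_{j-1}$, and $\sum_{j=1}^m (P_j - P_{j-1}) = P_m - P_0 = I_n - \R_A$. Good. Now $P_j(I_n - \R_j) = P_j \cdot \frac{2}{\|A_j\|^2} A_j A_j^T = \frac{2}{\|A_j\|^2} (\R_m \cdots \R_{j+1} A_j) A_j^T$.

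Comparing with the definition $F = \sum_{j=1}^m \frac{2}{\|A_j\|^2} \R_m \cdots \R_{j+1} A_j \e_j^T$, I would then note that $FA = \sum_{j=1}^m \frac{2}{\|A_j\|^2} (\R_m \cdots \R_{j+1} A_j)(\e_j^T A) = \sum_{j=1}^m \frac{2}{\|A_j\|^2} (\R_m \cdots \R_{j+1} A_j) A_j^T$, since $\e_j^T A = A_j^T$ is precisely the $j$-th row of $A$. This matches the telescoped expression for $I_n - \R_A$ term by term, completing the proof. The only real obstacle is getting the telescoping indices exactly right — in particular being careful that $\R_m \cdots \R_{j+1}$ is read as the identity when $j = m$, so that the $j=m$ term is $I_n - \R_m$, matching $P_m - P_{m-1}$ — but this is a routine bookkeeping check rather than a genuine difficulty.
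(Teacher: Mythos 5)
Your proof is correct and is essentially the paper's own argument: both compute $FA$ term by term using $\e_j^T A = A_j^T$ and $\frac{2}{\|A_j\|^2}A_jA_j^T = I_n - \R_j$, and then collapse $\sum_{j=1}^m \R_m\cdots\R_{j+1}(I_n-\R_j)$ by telescoping to $I_n - \R_A$. The paper just presents this as a single chain of equalities, whereas you spelled out the telescoping bookkeeping explicitly.
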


\begin{proof}
\cs{From equation (\ref{def:matrix F}), we know that
\beas
FA = \sum_{j=1}^m \frac{2}{\|A_j\|^2} \R_m \cdots \R_{j+1} A_j A_j^{\TT} 
= \sum_{j=1}^m  \R_m \cdots \R_{j+1} (I_n - \R_j) 
= I_n -\R_A.
\eeas
This completes the proof.}
\end{proof}

By Proposition \ref{prop:new identity},
\be
\label{relation of two linear systems}
F(A\x - \b) = (I_n - \R_A) \x - \u.
\ee
Below, we show that Algorithm \ref{Kaczmarz algorithm} actually solves the linear system $(I_n - \R_A) \x = \u$ instead of the original one $A\x=\b$.

\begin{thm}
\label{thm:solves new linear system}
Assume that the linear system $(I_n - \R_A) \x = \u$ is consistent. Let $\x_0$ be an arbitrarily chosen initial vector, then $\{\x_0,\x_m,\x_{2m},\x_{3m},\cdots\}$ \cs{generated by the procedure (\ref{our procedure})} lies on the sphere centered on the solution of $(I_n - \R_A) \x = \u$ that has the minimal distance to $\x_0$.
\end{thm}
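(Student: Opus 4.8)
The plan is to exploit the periodicity of the iteration. Since $i_k = (k \bmod m)+1$, one full sweep of $m$ Kaczmarz steps carries $\x_{km}$ to $\x_{(k+1)m}$ by the same affine map for every $k$. Composing the $m$ elementary updates $\x_j = \R_{i}\x_{j-1} + \frac{2b_i}{\|A_i\|^2}A_i$ and collecting the inhomogeneous terms, one obtains
\be
\x_{(k+1)m} = \R_A\, \x_{km} + \sum_{j=1}^m \frac{2b_j}{\|A_j\|^2}\, \R_m\cdots\R_{j+1} A_j = \R_A\, \x_{km} + F\b = \R_A\, \x_{km} + \u ,
\ee
where the middle equality is the definition (\ref{def:matrix F}) of $F$ together with $\u = F\b$. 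Hence the subsequence $\{\x_0,\x_m,\x_{2m},\ldots\}$ is the orbit of $\x_0$ under the affine map $\Phi(\x):=\R_A\x+\u$, whose fixed-point set $\{\x : (I_n-\R_A)\x = \u\}$ is exactly the solution set of the new linear system.

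Next I would observe that $\Phi$ is an affine isometry, because $\R_A = \R_m\cdots\R_1$ is a product of reflections and hence orthogonal. Therefore, for any solution $\x_*$ of $(I_n-\R_A)\x=\u$ we have $\Phi(\x_*)=\x_*$, and so
\be
\x_{(k+1)m} - \x_* = \R_A(\x_{km}-\x_*), \qquad \text{equivalently}\qquad \x_{km} - \x_* = \R_A^{\,k}(\x_0-\x_*),
\ee
which gives $\|\x_{km}-\x_*\| = \|\x_0-\x_*\|$ for every $k$. This already places the whole subsequence on a sphere centered at $\x_*$; the only remaining task is to identify the center as the solution of minimal distance to $\x_0$.

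For that step I would reuse the inner-product argument from the proof of Theorem \ref{thm1:singular case}. The solution set is the affine subspace $\x_* + \ker(I_n-\R_A)$, so for any two solutions $\x,\x_*$ the difference $\x-\x_*$ lies in $\ker(I_n-\R_A)$, whence $\R_A(\x-\x_*)=\x-\x_*$ and, by orthogonality of $\R_A$, also $\R_A^{-k}(\x-\x_*)=\x-\x_*$. Consequently
\be
\langle \x_{km}-\x_*,\, \x-\x_*\rangle = \langle \R_A^{\,k}(\x_0-\x_*),\, \x-\x_*\rangle = \langle \x_0-\x_*,\, \R_A^{-k}(\x-\x_*)\rangle = \langle \x_0-\x_*,\, \x-\x_*\rangle .
\ee
Taking $\x_*$ to be the solution of minimal distance to $\x_0$, the residual $\x_0-\x_*$ is orthogonal to the direction space $\ker(I_n-\R_A)$, so $\langle \x_0-\x_*,\,\x-\x_*\rangle = 0$ for every solution $\x$; by the displayed identity the same holds with $\x_0$ replaced by any $\x_{km}$, so $\x_*$ is also the closest solution to each $\x_{km}$. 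Together with the constant-radius fact this shows that all of $\{\x_0,\x_m,\x_{2m},\ldots\}$ lie on the sphere of radius $\|\x_0-\x_*\|$ centered at the minimal-distance solution, as claimed.

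The computations in the first two paragraphs are routine; the only point needing a little care is the last one, namely confirming that the common center is the \emph{minimal-distance} solution rather than an arbitrary one, which is precisely what the residual-orthogonality identity delivers. An equivalent way to see it is that $\R_A$ fixes $\ker(I_n-\R_A)$ pointwise and hence preserves its orthogonal complement, so the residual $\x_0-\x_*$ remains in $\ker(I_n-\R_A)^\perp$ under every power of $\R_A$.
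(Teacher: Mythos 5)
Your proposal is correct and follows essentially the same route as the paper's proof: it derives the one-sweep recursion $\x_{(k+1)m}=\R_A\x_{km}+\u$ with $\u=F\b$, uses orthogonality of $\R_A$ to get equidistance from any solution of $(I_n-\R_A)\x=\u$, and preserves the residual's orthogonality to the solution set to identify the center as the minimal-distance solution, exactly as the paper does via its analogue of Theorem \ref{thm1:singular case}. Your version is if anything slightly tidier, since it replaces the paper's informal averaging argument for locating the center with the direct fixed-point/equidistance observation.
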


\begin{proof}
For convenience, we denote the set $\{\x_0,\x_m,\x_{2m},\x_{3m},\cdots\}$ as $\V:=\{\v_0, \v_1, \v_2, \ldots,\}$. Then from the construction of $\x_{jm}$ we know that
\be
\label{shpere eq1}
\v_0 = \x_0, \quad
\v_{k+1} = \R_A \v_k + \u,
\ee
where $\u$ only depends on $A,\b$. It is $\u_m$ determined by the following recursive formula
\[
\u_1 = \frac{2 b_1 A_1}{\|A_1\|^2}, \quad
\u_{k+1} = \R_{k+1} \u_k + \frac{2 b_{k+1} A_{k+1}}{\|A_{k+1}\|^2}~
(k=1,2,\ldots,m-1).
\]
From the above equations, it is not hard to show that $\u = F \b$ defined by (\ref{def:vector u}).

Suppose that $\V$ lies on a sphere, then the center $\c$ must satisfy
\[
\c = 
\lim_{N\rightarrow \infty} \frac{1}{N} \sum_{k=1}^{N} \v_k
=\lim_{N\rightarrow \infty} \frac{1}{N} \sum_{k=1}^{N} (\R_A \v_{k-1} + \u) = \R_A \c + \u .
\]
Hence, $\c$ is the solution of the linear system $(I_n-\R_A) \c = \u$. On the other hand, for this $\c$, by (\ref{shpere eq1}) we can check that $\v_{k+1} - \c = \R_A (\v_k - \c)$. So it is indeed the center of the sphere. 

The proof that $\c$ has the minimal distance to $\x_0$ is similar to that of Theorem \ref{thm1:singular case}. More precisely,
for any solution $\x$ of the linear system $(I_n-\R_A) \x = \u$, we have $\v_{k+1} - \x = \R_A (\v_k - \x)$. So the points in $\V$ have the same distance to any solution of $(I_n-\R_A) \x = \u$. Moreover, the orthogonal property is preserved. Namely, if $\langle \v_0 - \c| \x - \c \rangle = 0$, then $\langle \v_k - \c| \x - \c \rangle = 0$ for all $k$. This means that the center $\c$ has the minimal distance to $\v_0 = \x_0$.
\end{proof}

Next, we explore the conditions such that the two linear systems $A\x = \b$ and $(I_n-\R_A) \x = \u$ are equivalent, i.e., they have the same solution space. The following two propositions further confirm the results we obtained for consistent linear systems. 

\begin{prop}
If ${\rm Rank}(A) = m$, then we have ${\rm Rank}(F) = m$. Consequently, for any $\x$, we have $A\x = \b$ if and only if $(I_n - \R_A)\x = \u$.
\end{prop}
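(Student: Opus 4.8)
The plan is to establish the rank equality $\rank(F) = m$ first, then deduce the equivalence of the two linear systems from it together with the identity $FA = I_n - \R_A$ of Proposition~\ref{prop:new identity}. To compute the rank of $F$, recall the definition
\[
F = \sum_{j=1}^m \frac{2}{\|A_j\|^2} \R_m \cdots \R_{j+1} A_j \e_j^T,
\]
which shows that the $j$-th column of $F$ is the vector $\frac{2}{\|A_j\|^2} \R_m \cdots \R_{j+1} A_j$. So $\rank(F)$ is the dimension of the span of these $m$ vectors. Each column is obtained from $A_j$ by applying the orthogonal operator $\R_m \cdots \R_{j+1}$ (the identity when $j = m$), so these vectors are nonzero; the task is to show they are linearly independent when $A$ has full row-rank $m$.

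First I would factor the column picture through a single invertible transformation. Writing $G$ for the $n \times m$ matrix whose $j$-th column is $A_j$ itself (i.e.\ $G = A^T$), one sees that $F$ is obtained from $A^T$ by a sequence of ``partial'' reflections acting on successive columns; I would look for a way to express $F = R A^T D$ or a similar product where $D$ is a nonsingular diagonal scaling and $R$ is orthogonal, after possibly reordering. A cleaner route is to use the already-proven identity $FA = I_n - \R_A$ directly: since $A$ has full row-rank $m \le n$, the matrix $A$ has a right inverse, so $\rank(FA) \le \rank(F) \le m$, and hence it suffices to show $\rank(I_n - \R_A) = m$. By Brady--Watt's formula (Lemma~\ref{lem:Brady-Watt formula}), $\R_A = I_n - 2A^T W^{-1} A$ with $W$ the lower-triangular matrix satisfying $W + W^T = 2AA^T$, so $I_n - \R_A = 2 A^T W^{-1} A$. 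Since $W$ is invertible (being lower-triangular with diagonal entries $\|A_i\|^2 \ne 0$) and $A$ has full row-rank $m$, the product $A^T W^{-1} A$ has rank exactly $m$: $A$ maps $\R^n$ onto $\R^m$, $W^{-1}$ is a bijection of $\R^m$, and $A^T$ is injective on $\R^m$. Therefore $\rank(I_n - \R_A) = m = \rank(FA) \le \rank(F) \le m$, forcing $\rank(F) = m$.

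For the second claim, the implication $A\x = \b \Rightarrow (I_n - \R_A)\x = \u$ is immediate from \eqref{relation of two linear systems}: $F(A\x - \b) = (I_n - \R_A)\x - \u$, so if $A\x = \b$ then $(I_n - \R_A)\x = \u$. For the converse, suppose $(I_n - \R_A)\x = \u$; then $F(A\x - \b) = 0$ by the same identity, i.e.\ $A\x - \b \in \ker F$. Since $F$ is $n \times m$ with $\rank(F) = m$, its kernel is trivial, whence $A\x = \b$. I would also note here that $\u = F\b$ lies in the column space of $F$, which has dimension $m$, so consistency of $(I_n - \R_A)\x = \u$ is automatic in this case — though that is not strictly needed for the stated equivalence.

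The only genuine obstacle is pinning down $\rank(F) = m$ cleanly; the temptation is to argue directly about the partially-reflected columns $\R_m \cdots \R_{j+1} A_j$ and their independence, which is fiddly because the reflections differ from column to column. Routing through $FA = I_n - \R_A$ and Brady--Watt's formula sidesteps this entirely and reduces everything to the elementary fact that $A^T W^{-1} A$ has rank $\rank(A)$ when $W$ is invertible. Once that reduction is in place the rest is a two-line kernel argument, so I expect the write-up to be short.
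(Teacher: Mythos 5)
Your proof is correct and takes essentially the same route as the paper: both arguments rest on the identity $FA = I_n - \R_A$ combined with Brady--Watt's formula $I_n - \R_A = 2A^TW^{-1}A$ and the full row-rank hypothesis on $A$. The only difference is in the last step --- the paper cancels $A$ on the right to obtain the explicit formula $F = 2A^TW^{-1}$, hence full column rank, whereas you deduce $\rank(F)=m$ from $\rank(F) \geq \rank(FA) = \rank(2A^TW^{-1}A) = m$ --- and in either case the stated equivalence follows from the injectivity of $F$ exactly as you write.
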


\begin{proof}
By Brady-Watt’s formula, $FA = I_n - \R_A = 2A^{\TT} W^{-1} A$ for some invertible matrix $W$. Since $A$ has full row-rank, we have $F = 2A^{\TT} W^{-1}$. Hence, $F$ has full column-rank.
\end{proof}

More generally, combining Theorem \ref{thm:inconsistent case} we have the following result.

\begin{prop}
\label{cor2:equivalent}
Suppose that $A\x = \b$ is consistent and $A$ is reflection consistent, then for any $\x$, we have $A\x = \b$ if and only if $(I_n - \R_A)\x = \u$.
\end{prop}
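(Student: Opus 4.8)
The plan is to leverage Theorem~\ref{thm:inconsistent case}, which already tells us that for a consistent, reflection-consistent system $A\x=\b$, running the Kaczmarz iteration from any $\x_0$ produces a sequence whose cyclic subsequence $\{\x_{jm}\}$ converges to the solution $\x_*$ of minimal distance to $\x_0$. The heart of the matter is then to identify this $\x_*$ with a solution of $(I_n-\R_A)\x=\u$, and conversely. First I would invoke Theorem~\ref{thm:solves new linear system}: the same cyclic subsequence $\{\x_{jm}\}$ lies on a sphere centered at the solution of $(I_n-\R_A)\x=\u$ of minimal distance to $\x_0$ --- provided that system is consistent. Since the subsequence converges (by Theorem~\ref{thm:inconsistent case}), it must converge to its sphere's center, so the center equals $\x_*$; in particular $\x_*$ solves $(I_n-\R_A)\x=\u$, and relation~(\ref{relation of two linear systems}), $F(A\x-\b)=(I_n-\R_A)\x-\u$, shows that any solution of $A\x=\b$ is a solution of $(I_n-\R_A)\x=\u$ (hence the latter system is indeed consistent and this step is legitimate).

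It remains to prove the converse inclusion: every solution of $(I_n-\R_A)\x=\u$ solves $A\x=\b$. Here I would argue at the level of dimensions of the solution sets, both of which are affine subspaces. The solution set of $A\x=\b$ is a coset of $\ker A$, which has dimension $n-r$ where $r=\rank(A)$. The solution set of $(I_n-\R_A)\x=\u$ is a coset of $\ker(I_n-\R_A)$, i.e. the eigenspace of $\R_A$ for eigenvalue $1$. By Proposition~\ref{prop: eigenvalue 1 of products of reflections}(a) (or directly via Brady--Watt, Lemma~\ref{lem:Brady-Watt formula}, $\R_A=I_n-2A^TW^{-1}A$ with $W$ built from $r$ linearly independent rows), one sees that when $A$ is reflection consistent this eigenspace is exactly $\ker A$: indeed reflection consistency forces $\R_A$ and the product over $r$ independent rows to share the eigenvalue-$1$ eigenspace, and for the latter product Brady--Watt gives that this eigenspace coincides with the common kernel of those $r$ rows, which is $\ker A$. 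Thus $\ker(I_n-\R_A)=\ker A$, the two affine solution sets have the same dimension, and since we have already shown one is contained in the other (solutions of $A\x=\b$ solve $(I_n-\R_A)\x=\u$), they must be equal. This gives the ``if and only if''.

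The main obstacle I anticipate is the clean identification $\ker(I_n-\R_A)=\ker A$ under the reflection-consistency hypothesis: one must be careful that reflection consistency is stated in terms of a choice of $r$ linearly independent rows, and that the eigenvalue-$1$ eigenspace of the length-$r$ product of reflections through those rows is genuinely $\ker A$ (not merely contained in it) --- this needs the full-row-rank half of Proposition~\ref{prop: eigenvalue 1 of products of reflections}(a) applied to those rows, plus the observation that a vector annihilated by $r$ independent rows is annihilated by all of $A$. Once that is in hand, everything else is bookkeeping: consistency of the new system follows from~(\ref{relation of two linear systems}), convergence from Theorem~\ref{thm:inconsistent case}, the center identification from Theorem~\ref{thm:solves new linear system}, and the final equivalence from equality of affine solution sets. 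An alternative, perhaps shorter, route avoids dimension counting entirely: take any $\x$ with $(I_n-\R_A)\x=\u$; then by~(\ref{relation of two linear systems}) $F(A\x-\b)=0$, so $A\x-\b\in\ker F$, and it would suffice to show $\ker F$ meets the column space considerations so that $A\x=\b$ --- but pinning down $\ker F$ seems no easier than the eigenspace argument, so I would keep the dimension-counting approach as the primary plan.
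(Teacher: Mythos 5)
Your proposal is correct, but its decisive step is genuinely different from the paper's. The paper proves the equivalence purely dynamically: by Theorem \ref{thm:inconsistent case} the (averaged) cyclic subsequence converges to the minimal-distance solution of $A\x=\b$, by Theorem \ref{thm:solves new linear system} that limit is the minimal-distance solution of $(I_n-\R_A)\x=\u$, and the converse inclusion is obtained by observing that every solution of the new system arises as such a limit for a suitable initial vector (start at that solution itself). You instead get the forward inclusion for free from the algebraic identity $F(A\x-\b)=(I_n-\R_A)\x-\u$, and handle the converse by showing $\ker(I_n-\R_A)=\ker A$: reflection consistency identifies the eigenvalue-$1$ eigenspace of $\R_A$ with that of the product over $r$ independent rows, which by Brady--Watt (Proposition \ref{prop: eigenvalue 1 of products of reflections}(a)) is exactly the common kernel of those rows, i.e.\ $\ker A$; then both solution sets are cosets of the same subspace with one contained in the other, hence equal. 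This linear-algebraic route makes the role of reflection consistency explicit and does not actually need the convergence theorems at all (your first paragraph, which invokes them, is dispensable --- and note the minor slip there: the cyclic subsequence itself does not converge, only its Ces\`aro averages do, though that does not affect your argument since the forward inclusion already follows from the identity $FA=I_n-\R_A$). The paper's argument is shorter and emphasizes the algorithmic interpretation, but it leans on the somewhat terse claim that every solution of the new system is reachable as a limit; your version fills in that point with an explicit kernel computation.
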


\begin{proof}
This is a direct corollary of Theorems \ref{thm:inconsistent case} and \ref{thm:solves new linear system}. To be more exact, from the proof of Theorem \ref{thm:inconsistent case}, we know that Algorithm \ref{Kaczmarz algorithm} will return to a solution of $A\x = \b$ that has the minimal distance to the initial vector. From Theorem \ref{thm:solves new linear system}, we know that this solution is indeed a solution of $(I_n - \R_A)\x = \u$. Any solution of $(I_n - \R_A)\x = \u$ can be obtained in this way by choosing an appropriate initial vector. So the two linear systems $A\x = \b$ and $(I_n-\R_A) \x = \u$ are equivalent.
\end{proof}

The above two results provide more evidence for the success of \cs{Algorithm \ref{Kaczmarz algorithm}} proposed in the above section. 
The following result implies that when $A$ has full column-rank, the linear system $(I_n - \R_A) \x = \u$  has a unique solution.

\begin{prop}
\label{prop:obtain a non singular linear system}
If ${\rm Rank}(A) = n$ and $A$ is reflection consistent, then ${\rm Rank}(I_n - \R_A) = n$.
\end{prop}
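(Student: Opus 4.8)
The plan is to reduce the statement to Lemma~\ref{key lemma 2} by unwinding the reflection-consistency hypothesis. Since $\rank(A)=n$, the rank parameter $r$ appearing in the definition of reflection consistent equals $n$; thus there exist $n$ linearly independent rows $\a_1^T,\ldots,\a_n^T$ of $A$. Being $n$ linearly independent vectors in $\mathbb{R}^n$, they form a basis, so the $n\times n$ matrix $\tilde A$ whose $i$-th row is $\a_i^T$ is invertible, and it clearly has no zero rows.

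Next I would apply Lemma~\ref{key lemma 2} to $\tilde A$. It gives $\det(\R_{\tilde A}-I_n)=(-2)^n\det(\tilde A)^2\prod_{i=1}^n\|\a_i\|^{-2}\neq 0$, where $\R_{\tilde A}:=\prod_{i=1}^n\bigl(I_n-2\a_i\a_i^T/\|\a_i\|^2\bigr)$ is the product of the $n$ reflections associated with the chosen rows (in the ordering fixed by the convention for $\R_{(\cdot)}$). Hence $1$ is not an eigenvalue of $\R_{\tilde A}$; equivalently, the eigenspace of $\R_{\tilde A}$ for the eigenvalue $1$ is $\{\mathbf 0\}$.

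Finally, reflection consistency of $A$ asserts precisely that $\R_A$ and $\R_{\tilde A}$ have the same eigenspace corresponding to eigenvalue $1$. Combining this with the previous step, that common eigenspace is $\{\mathbf 0\}$, so $1$ is not an eigenvalue of $\R_A$. Therefore $I_n-\R_A$ has trivial kernel, i.e.\ $I_n-\R_A$ is invertible and $\rank(I_n-\R_A)=n$, as claimed.

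The only (very minor) obstacle is bookkeeping: one must match the ordering convention in $\R_A=\R_m\cdots\R_1$ with the product of $r$ reflections appearing in the definition of reflection consistent, and note that the conclusion of Lemma~\ref{key lemma 2} is insensitive to the order in which the $n$ reflections are multiplied (only $\det(\tilde A)^2$ enters the formula, up to the positive factor $\prod_i\|\a_i\|^{-2}$). Beyond that there is no analytic content — the result is essentially a corollary of Lemma~\ref{key lemma 2} packaged through the reflection-consistency definition.
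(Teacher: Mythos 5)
Your proof is correct and follows essentially the same route as the paper: reduce via the reflection-consistency hypothesis to an invertible $n\times n$ submatrix of $A$ and show the corresponding product of $n$ reflections has no eigenvalue $1$, hence neither does $\R_A$. The only difference is the justification of that middle step — you invoke the determinant formula of Lemma~\ref{key lemma 2}, while the paper uses Brady--Watt's formula (Lemma~\ref{lem:Brady-Watt formula}) to identify the eigenvalue-$1$ eigenspace with the kernel of the invertible submatrix; since the paper notes that Brady--Watt's formula implies Lemma~\ref{key lemma 2}, the two justifications are interchangeable, and your remark on the irrelevance of the ordering of the reflections is the right bookkeeping.
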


\begin{proof}
This follows directly from Theorem \ref{thm:inconsistent case}. More precisely, suppose the first $n$ rows of $A$ are linearly independent. Denote this submatrix as $B$. Since $A$ is reflection consistent,  $\R_A,\R_B$ have the same eigenspace corresponding to \cs{the} eigenvalue $1$. By Brady-Watt’s formula, this eigenspace corresponds to the kernel of $B$ (see the proof of (a) of Proposition \ref{prop: eigenvalue 1 of products of reflections}). But $B$ is invertible, this kernel is empty. Thus $1$ is not an eigenvalue of $\R_A$, i.e., $I_n - \R_A$ is invertible.
\end{proof}

By Equation (\ref{relation of two linear systems}) and Brady-Watt’s formula, to solve $A\x = \b$, \cs{Algorithm \ref{Kaczmarz algorithm}} indeed solves
$A^{\TT} W^{-1} A \x = A^{\TT} W^{-1} \b$, where $W$ is the lower triangular matrix such that $W+W^{\TT}=2AA^{\TT}$. However, when solving a least-squares problem, we usually focus on solving
$A^{\TT} A \x = A^{\TT} \b$. So when the linear system $A\x=\b$ is inconsistent and $A$ does not have full column rank, there is no clear connection between the least-squares solution and the solution of $A^{\TT} W^{-1} A \x = A^{\TT} W^{-1} \b$. In other words, \cs{Algorithm \ref{Kaczmarz algorithm}} fails to solve least-squares problems.

\section{Numerical experiments}
\label{section:Numerical experiments}

In this section, we compare different Kaczmarz algorithms through numerical tests. We focus on four different versions of Kaczmarz algorithms: Strohmer-Vershynin's randomized Kaczmarz algorithm \cite{strohmer2009randomized}, Needell-Tropp's randomized block Kaczmarz algorithm \cite{needell2014paved}, Steinerberger's Kaczmarz algorithm \cite{steinerberger2020surrounding} and a modified version of Algorithm \ref{Kaczmarz algorithm2}. 



For self-containing, we first detail the implementation of these algorithms. Let $A$ be an $m\times n$ matrix, and let $\b$ be an $m\times 1$ vector. For these algorithms, we first arbitrarily choose an initial vector $\x_0$, then iterate over it to obtain a series of vectors. We stop the iteration when we receive a vector $\tilde{\x}$ such that $\|A\tilde{\x}-\b\|\leq \varepsilon$ for some $\varepsilon$.

\begin{itemize}
\item Strohmer-Vershynin's algorithm ({\tt SV}):

Arbitrarily choose an initial guess $\x_0$, and for $k=0,1,2,\ldots$ compute
\be \label{alg1}
\x_{k+1} = \x_k + \frac{b_{i_k} - A_{i_k}^{\TT} \x_k}{\|A_{i_k}\|^2} A_{i_k},
\ee
where $i_k\in\{1,\ldots,m\}$ is chosen with probability $\|A_{i_k}\|^2/\|A\|_F^2$. Output $\x_N$ if $\|A\x_N-\b\|\leq \varepsilon$.


\item {Needell-Tropp's algorithm ({\tt NT}): 

Arbitrarily choose an initial guess $\x_0$, and for $k=0,1,2,\ldots$ compute
\be \label{alg6}
\x_{k+1} = \x_k + A_{\tau}^{+} (\b_{\tau} - A_{\tau}\x_k) ,
\ee
where $A_{\tau}, \b_{\tau}$ are determined as follows.
Let $p=\lceil \|A\|^2 \rceil$, and let $\sigma$ be a permutation of $\{1,\ldots,m\}$.
Define $\mathcal{T} = \{\tau_1,\ldots,\tau_p\}$ as a partition of the row indices $\{1,\ldots,m\}$ with $\tau_i=\{\sigma(k):k=\lceil(i-1)n/m\rceil+1, \cdots, \lceil in/m\rceil\}$.  The matrix $A_{\tau}$ consists of the rows of $A$ whose indices are in $\tau$, and $\b_\tau$ is the subvector of $\b$ with components listed in $\tau$. In each step of the iteration (\ref{alg6}),  $\tau$ is chosen uniformly at random from $\mathcal{T}$, and $A_{\tau}^{+}$ is the pseudoinverse of $A_{\tau}$.
Output $\x_N$ if $\|A\x_N-\b\|\leq \varepsilon$.
}



{

\item Steinerberger's algorithm ({\tt SA}):
\begin{enumerate}
\item Arbitrarily chooses an initial guess $\x_0$.
\item Compute $M \approx \|A\|_F^2\|A^{-1}\|^2$ vectors $\x_0,\ldots,\x_{M-1}$ according to 
\be  \label{alg4}
\x_{k+1} = \x_k + 2\frac{b_{i_k} - A_{i_k}^{\TT} \x_k}{\|A_{i_k}\|^2} A_{i_k},
\ee
where $i_k$ is chosen with probability $\|A_{i_k}\|^2/\|A\|_F^2$.
\item Compute their average
$
\tilde{\x} = \frac{1}{M} \sum_{i=0}^{M-1} \x_i.
$
\item Check if $\|A\tilde{\x} - \b\| \leq \varepsilon$. If yes, return $\tilde{\x}$. Otherwise, go back to step 1 with $\x_0 = \tilde{\x}$.
\end{enumerate}

\item Deterministic iterative reflection algorithm ({\tt DIR}):
\begin{enumerate}
\item Arbitrarily chooses an initial guess $\x_0$.
\item Compute $M \approx m \eta(A)$ vectors $\x_0,\ldots,\x_{M-1}$ according to (\ref{alg4}),
where $i_k = (k \mod m) + 1$ is chosen in order.\footnote{{\color{black}It might be not easy to compute $\eta(A)$. Even if we can compute it very efficiently, if $\eta(A)$ is too large, then it may not be wise to set $M=m\eta(A)$ as the theoretical result suggests. It is time-consuming to iterate many times. So in practice, we can try $M=2^i m$ for a series of $i$ to see which is better. We also do this for Steinerberger’s algorithm. From our tests, these two algorithms perform very well when we choose $i=1-\lfloor \log(m/n)) \rfloor$ or $i=2-\lfloor \log(m/n)) \rfloor$ for the case $m>n$.  }}
\item Compute their average
$
\tilde{\x} = \frac{1}{M} \sum_{i=0}^{M-1} \x_i.
$
\item Check if $\|A\tilde{\x} - \b\| \leq \varepsilon$. If yes, return $\tilde{\x}$. Otherwise, go back to step 1 with $\x_0 = \tilde{\x}$.
\end{enumerate}

}

\end{itemize}

From the analysis in the previous two sections, Algorithm \ref{Kaczmarz algorithm} has some interesting theoretical results; however, this algorithm is usually inefficient in practice because it uses too many rows. Because of this, we mainly focus on {\tt DIR} described above in the numerical tests.
Regarding Steinerberger's algorithm, $A$ is required to be nonsingular. The nonsingularity property ensures the claimed convergence rate. However, this is a theoretical result. In our tests, we will ignore this assumption and run the algorithm {\tt SA} directly.

{Before presenting the numerical results}, we first illustrate the difference between Steinerberger's algorithm and Algorithm \ref{Kaczmarz algorithm2} in dimension 3. 

\begin{figure}[H]
     \centering
     \subfigure[$i_k$ is chosen in order]{
     \includegraphics[height=5cm]{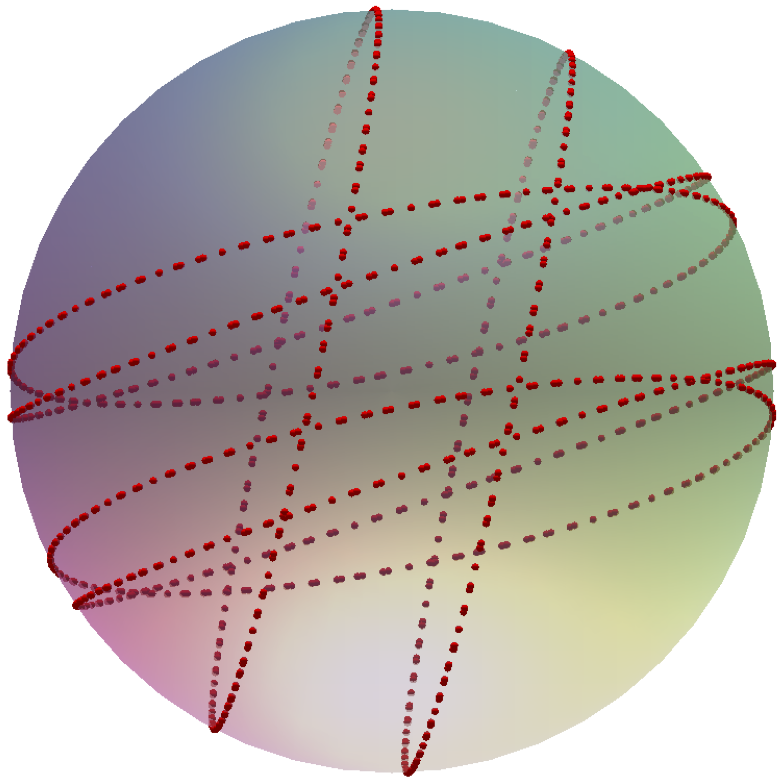}
     }\hspace{3cm}
     \subfigure[$i_k$ is chosen in random]{
     \includegraphics[height=5cm]{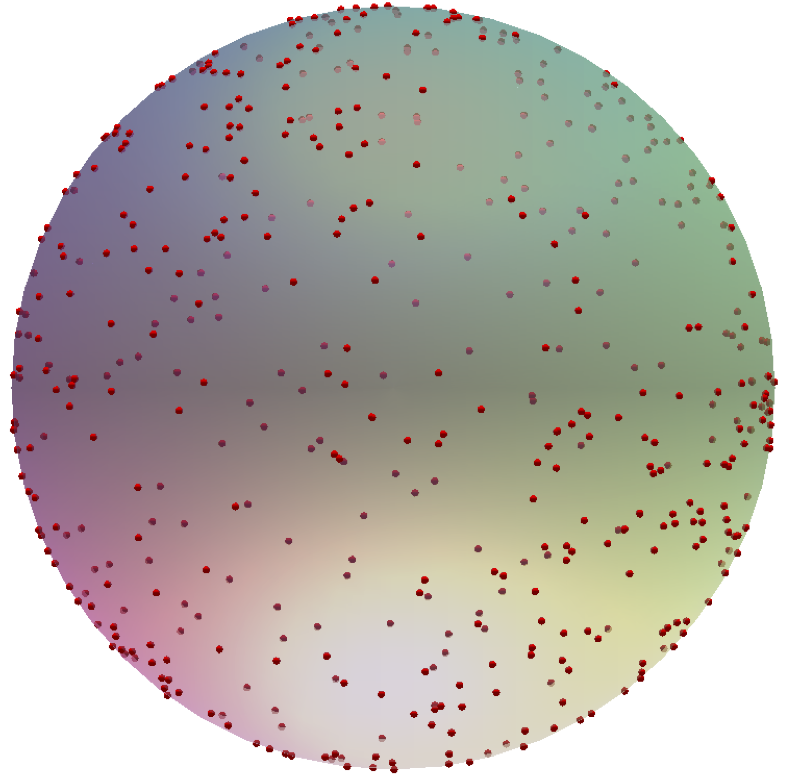}
     }
    \caption{An illustration of the distribution of $\{\x_0, \x_1, \ldots, \x_{1000}\}$ on the sphere when $m=n=3$. The vectors are generated by the procedure (\ref{alg4}).}
    \label{fig1}
\end{figure}

The left one in Figure \ref{fig1}  describes the distribution of the vectors 
$\{\x_i:i=0,1,\ldots,1000\}$ generated by \cs{the} procedure (\ref{alg4}) with $i_k$ chosen in order, i.e., Algorithm \ref{Kaczmarz algorithm2}.
These vectors are located on six separate circles, each circle represents an $\S_{kj}$ defined in Section \ref{section:Consistent linear systems}, and $\S_{k0}$ is parallel to $\S_{k1}$. \cs{The midpoint of the centers of $\S_{k0}$ and $\S_{k1}$ is the solution}. This is confirmed by Theorem \ref{main thm}. 
The right one in Figure \ref{fig1}  shows the distribution of $\{\x_i:i=0,1,\ldots,1000\}$ generated by the process (\ref{alg4}) with $i_k$ chosen with probability $\|A_{i_k}\|^2/\|A\|_F^2$, i.e., Steinerberger's algorithm. The points are randomly distributed on the sphere. The average of them also provides a good approximation of the center of the sphere, i.e., a solution of the linear system $A\x=\b$.

\subsection{Comparison of $\eta(A)$ and $\kappa(A)$}

The complexity of the deterministic iterative reflection algorithm depends on the parameter $\eta(A)$, while the complexities of previous Kaczmarz algorithms depend on the (scaled) condition number of $A$. As discussed after Proposition \ref{prop:relation of eta and kappa}, for random Gaussian matrices, $\eta(A)$ has order $\kappa(A)$ instead of $\kappa^2(A)$. To see this more clearly, we numerically computed $\eta(A)$ and $\kappa(A)$ for around 10000 random matrices of size at most $1000\times 1000$ whose entries are independently taking values from the standard normal distribution. The numerical results suggest that there might be two constants $c_0,c_1$ with $c_0\leq 0.55, c_1\geq 38.27$ such that $c_0 \leq \kappa(A)/\eta(A) \leq c_1$.
\csp{Although it is unclear about the dependence of the ratio $\kappa(A)/\eta(A)$ on the dimension theoretically, the numerical tests here suggest that these two quantities are often comparable.}




\subsection{Comparison of the four algorithms on random matrices}

In this section, we show our testing results. All the calculations were done in the software {\tt Maple 2022} on {\tt MacBook Pro with processor 2.7 GHz Dual-Core Intel Core i5}. The code is available at
\url{https://drive.google.com/file/d/1ZwzSh1Npgiw9nry6IjytkAvtW-wt6Jjh/view?usp=sharing}.
In the tests, we collected the total time the algorithms take to find a solution $\tilde{\x}$ of the linear system $A\x=\b$ such that  $\|A\tilde{\x}-\b\|\leq 0.01$. Since {\tt SA} and {\tt DIR} only apply to consistent linear systems, all the linear systems we generated are consistent. In the tests, we mainly focused on random matrices whose entries independently take values from the standard normal distribution. \csp{In the tests, for each case, we run 50 randomly generated examples and collect the average value of the runtime. All four algorithms start from the same initial vector, which is also generated randomly.}

\setlength{\arrayrulewidth}{0.3mm}
{\renewcommand
\arraystretch{1.5}
\begin{table}[h]
\centering
\begin{tabular}{|c|c|c|c|c|c|c|c|c|c|c|c|} 
 \hline
 $m$ & 200 & 500 & 1000 & 1500 & 2000 & 5000 & 10000 & 15000 & 20000 \\ \hline
 {\tt SV}  & 23.47 & 17.99 & 14.57 & 15.34 & 15.35 & 16.70 & 18.48 & 41.57 & 40.18 \\
 {\tt NT} & 16.83 & 9.73 & 7.29 & 6.53 & 7.46 & 6.79 & 5.75 & 6.43 & 4.45 \\
 {\tt SA} & 2.47 & 1.66 & 2.97 & 1.68 & 2.07 & 2.19 & 2.77 & 3.03 & 3.41 \\
 {\tt DIR} & 2.38 & 2.56 & 2.79 & 2.77 & 3.34 & 3.26 & 4.30 & 4.96 & 5.16 \\ 
 \hline
\end{tabular}
\caption{The runtime (in seconds) of different Kaczmarz methods when $n=100$.}
\label{table:numerical experiments-1}
\end{table}
}

\setlength{\arrayrulewidth}{0.3mm}
{\renewcommand
\arraystretch{1.5}
\begin{table}[h]
\centering
\begin{tabular}{|c|c|c|c|c|c|c|c|c|c|c|c|} 
 \hline
 $m$ & 1000 & 1500 & 2000 & 2500 & 3000 & 3500 & 5000 & 10000 & 15000 & 20000 \\ \hline
 {\tt SV}  & 73.84 & 51.46 & 40.32 & 41.73 & 41.14 & 41.76 & 50.84 & 141.23 & 147.53 & 225.57 \\
 {\tt NT} & 67.59 & 58.98 & 40.46 & 39.51 & 35.85 & 36.21 & 31.95 & 59.09 & 36.17 & 35.21 \\
 {\tt SA} & 6.47 & 6.26 & 5.83 & 6.71 & 8.55 & 9.33 & 10.16 & 18.45 & 24.27 & 26.97 \\
 {\tt DIR} & 7.54 & 7.33 & 7.11 & 8.27 & 11.21 & 12.29 & 13.65 & 22.05 & 25.81 & 29.84 \\ 
 \hline
\end{tabular}
\caption{The runtime (in seconds) of different Kaczmarz methods when $n=300$. }
\label{table:numerical experiments-2}
\end{table}
}

\setlength{\arrayrulewidth}{0.3mm}
{\renewcommand
\arraystretch{1.5}
\begin{table}[h]
\centering
\begin{tabular}{|c|c|c|c|c|c|c|c|c|c|c|c|} 
 \hline
 & $M$ & $2m$ & $m$ & $m/2$ & $m/4$ & $m/8$ & $m/16$ & $\lfloor m/32\rfloor$ & $\lfloor m/64\rfloor$ \\ \hline
$m=10000$ & {\tt SA} & 22.11 & 12.39 & 10.45 & 9.73 & 6.50 & 5.14 & 2.63 & 4.34 \\
$n=100$ & {\tt DIR} & 21.78 & 12.82 & 10.43 & 9.05 & 5.96 & 3.04 & 5.01 & 10.34 \\ \hline
$m=10000$ & {\tt SA} & 120.13 & 58.52 & 41.85 & 26.05 & 17.87 & 14.45 & 14.31 & 23.15 \\
$n=300$ & {\tt DIR} & 115.51 & 57.86 & 36.95 & 26.43 & 22.05 & 34.95 & 75.12 & 654.14 \\  \hline
\end{tabular}
\caption{The influence of the number $M$ on {\tt SA} and {\tt DIR}. The unit of time is seconds.}
\label{table:numerical experiments-3}
\end{table}
}

From Tables \ref{table:numerical experiments-1} and \ref{table:numerical experiments-2}, we can see that {\tt SA} is slightly better than {\tt DIR}, while the difference is not big. Both can compete with {\tt NT}, which is further better than {\tt SV}. Moreover, when the linear system is not so over-determined, {\tt SA} and {\tt DIR} seem to have better performance than {\tt NT}. 
One uncertainty for {\tt SA} and {\tt DIR} is the quantity $M$, which determines how many samples we should generate before we restart further iterations. Let us assume that $m \gg n$. Denote $i=\lfloor \log(m/n) \rfloor$. From our numerical tests, we found that for {\tt SA}, it is more efficient to use $M\in\{\lfloor m/2^{i-1} \rfloor, \lfloor m/2^{i} \rfloor\}$, and for {\tt DIR}, $M\in\{\lfloor m/2^{i-2} \rfloor, \lfloor m/2^{i-1} \rfloor\}$ is better. In Table \ref{table:numerical experiments-3}, we list some numerical results for this. We also found that for {\tt DIR}, it is very slow if we set $M\leq n$ (e.g., see the last example in the fifth row of Table \ref{table:numerical experiments-3}). This is not a problem for {\tt SA}. But the efficiency of {\tt SA} may be affected if $M\ll n$ (e.g., see the last example in the fourth row of Table \ref{table:numerical experiments-3}). Both are inefficient when $M$ is too large.

\section{Conclusions}

In this paper, we proposed a new deterministic Kaczmarz algorithm by replacing orthogonal projections with reflections in the original Kaczmarz method. We established rigorous theorems about its correctness and efficiency. In this process, we discovered an interesting geometric fact about the solutions of consistent linear systems. Namely, any solution of a consistent linear system can be represented as the midpoint of the centers of two spheres. We feel it would be interesting to find more results from this fact. According to our numerical tests, the new Kaczmarz algorithm has a performance competitive with randomized Kaczmarz algorithms for consistent linear systems. However, it cannot be used to solve least-squares problems, which is partially caused by the structure of the algorithm. It would be interesting to find a way to fix this. Perhaps some new interesting geometric facts can be found in this way. Another interesting research topic is to find certain matrices such that $\eta(A)$ is small. In this paper, we found some partial results about its connection to the condition number. But this might be worth a systematic study. Finally, from our numerical experiments,  Steinerberger's algorithm works very well in practice. However, Steinerberger only studied the convergence rate in the nonsingular case. So it would be interesting to better understand this algorithm theoretically in the general case.

\section*{Acknowledgement}

I would like to thank Alex Little and Nina Snaith for the helpful discussions on Proposition \ref{prop:relation of eta and kappa}, and the anonymous referees for valuable suggestions which greatly improved this work. I also would like to thank Gilbert Strang for his helpful suggestions on the paper.
I acknowledge support from EPSRC grant EP/T001062/1. This project has received funding from the European Research Council (ERC) under the European Union's Horizon 2020 research and innovation programme (grant agreement No.\ 817581).

\bibliographystyle{siam}
\bibliography{Amain}

\end{document}